\documentclass{article}
\usepackage{amsfonts,amsmath,amssymb,amsthm,cite,enumerate,graphicx,latexsym,mathrsfs}
\usepackage{authblk}
\usepackage{hyperref}
\usepackage[T1]{fontenc}
\usepackage[utf8]{inputenc}
\usepackage{float}
\usepackage{geometry}
\usepackage{setspace}

\allowdisplaybreaks[2]

\date{}
\makeatletter

\newcommand{\Rmnum}[1]{\expandafter\@slowromancap\romannumeral #1@}
\makeatother

\numberwithin{equation}{section}

\newtheorem{lemma}{Lemma}[section]

\newtheorem{remark}{Remark}[section]

\newtheorem{theorem}{Theorem}[section]

\newcommand\theref[1]{Theorem~\ref{#1}}

\newcommand\secref[1]{Section~\ref{#1}}

\title{Existence of detonation wave solutions to the piston problem for the Zeldovich-von Neumann-D{\"o}ring combustion model}
\author{Xiaomin Zhang\thanks{{e}-mail: zxm15924687@163.com} \quad Huimin Yu\thanks{Corresponding author {{e}-mail: hmyu@sdnu.edu.cn}}
 \\ \small\textit{ Department of mathematics, Shandong Normal University, Jinan 250014 China}}
\begin{document}
\begin{sloppypar}
\onehalfspacing
\date{}
\maketitle

\begin{center}
\begin{minipage}{130mm}{\small
\textbf{Abstract}:
In this paper, we study detonation wave solutions to one-dimensional piston problem for the Zeldovich-von Neumann-D{\"o}ring (ZND) combustion model with a one-step exothermic chemical reaction. As a special type of shock wave, the position of the detonation wave is unknown, which make our model to be a free boundary problem.~The global existence of detonation wave solutions to this free boundary problem is proved. Compared with previous result~\cite{Lai}, we do not impose any dissipation conditions on the equations and the boundaries.
\\
\textbf{Keywords}: Detonation wave, Piston problem, Zeldovich-von Neumann-D{\"o}ring combustion model, Global existence
\\
\textbf{Mathematics Subject Classification 2020}: 35L65; 35L67; 76N10.}
\end{minipage}
\end{center}

\section{Introduction}
\indent\indent In a piston-driven combustion system for flammable gases, the high-speed motion of the piston serves as the core driving source for shock wave generation. In this paper, assume that the piston is located at the origin initially and then suddenly moves with a speed depending only on time $t$. When the piston moves into the unburnt gas, a shock front emerges and propagates away from the piston. The gas ahead of this shock front remains in a static and unburnt state. As the gas passes through the shock front, there is a notable increase in temperature. When the temperature rise across the shock front surpasses the ignition temperature of the gas, a combustion reaction is triggered, occurring in the region behind the shock front. The pre-compression shock wave in the combustion process is commonly referred to as a detonation wave, as documented in the work of Courant and Friedrichs (see ~\cite{Courant}). In fact, combustion waves can be partitioned into two primary types. One is the detonation wave, which features supersonic propagation and a compressive nature. The other is the deflagration wave, characterized by subsonic motion and an expansive behavior. Since chemical reactions are governed by molecular collisions, the length scale relevant to the chemical reaction is generally substantially larger than that of the shock wave associated with detonation. As a result, for high Mach number combustion, the Zeldovich-von Neumann-D{\"o}ring (ZND) model stands out as the most suitable inviscid model. We study the denotation solution to one-dimensional piston problem for the ZND combustion model in this paper. As an important model for describing the propagation of combustion waves in a one-step exothermic chemical reaction, the ZND model has a finite reaction rate and no viscous and heat conduction effects. It is expressed as the following exothermically reacting compressible Euler equations in the Lagrangian coordinates:
\begin{equation}\label{a1}
\left\{\begin{aligned}
&\nu_{t}-u_{x}=0,\\
&u_{t}+p_{x}=0,\\
&E_{t}+(pu)_{x}=0,\\
&Z_{t}=-\kappa\psi(T)Z,
\end{aligned}\right.
\end{equation}
where $\nu, u, p,Z$ represent the specific volume, velocity, pressure and fraction of unburned gas in the mixture respectively. $E=\frac{1}{2}u^{2}+e+Z\hbar$ is the specific total energy with the specific internal energy $e$ and the binding energy $\hbar>0$ per unit mass of unburnt gas. $\psi(T)$ is the ignition function with the temperature $T$ satisfying the Arrhenius-type:
\begin{align*}
\psi(T)=
\left\{\begin{aligned}
&T^{\ell}e^{-\frac{\mathcal{A}}{T-T_{i}}}, \quad T>T_{i},\\
&0,\quad\quad\quad\quad~~ T\leq T_{i},
\end{aligned}\right.
\end{align*}
where the constant $T_{i}>0$ is the ignition temperature and $\ell, \mathcal{A}$ are two constants.
The constant $\kappa>0$ represents the reaction rate.

In this paper, we consider the internal energy $e$ satisfying the following equation of state:
\begin{align}\label{a3}
e=e(\nu,s)=
\left\{\begin{aligned}
&\frac{e^{s}\nu^{1-\gamma}}{\gamma-1},\quad\quad\quad\quad\quad\qquad\quad\quad\quad\quad~~~~ \nu\geq1,\\
&\frac{\gamma}{2}e^{s}\nu^{2}-(\gamma+1)e^{s}\nu+\frac{\gamma^{2}+\gamma}{2(\gamma-1)}e^{s},\quad \nu<1,
\end{aligned}\right.
\end{align}
where $s$ represents the specific entropy and $\gamma>1$ is a constant. Since the internal state variables $e,T,s,p,\nu$ must satisfy the thermodynamical relation
\begin{align}
de=Tds-pd\nu,\label{tt1}
\end{align}
we obtain
\begin{align}\label{a4}
p=p(\nu,s)=
\left\{\begin{aligned}
&e^{s}\nu^{-\gamma},\quad\quad\quad\quad\quad~~ \nu\geq1,\\
&(\gamma+1)e^{s}-\gamma e^{s}\nu,\quad \nu<1,
\end{aligned}\right.
\end{align}
and
\begin{align}
T=e.\label{a5}
\end{align}

Over the past few years, significant progress has been made in the research on shock wave solutions to piston problems. In 2003, Chen~\cite{ChenS} carried out a study on the global existence and stability of shock front solutions for the multi-dimensional piston problem for the unsteady potential flow equation. Subsequently, Chen $\emph{et al.}$~\cite{ChenGGG} investigated the local shock solution for a multi-dimensional piston problem for the compressible Euler equations. In 2008, Xu and Dou~\cite{Xu} probed into the global existence of shock front solutions for the one-dimensional piston problem of the relativistic Euler equations, assuming that the piston velocity is a perturbation of a constant. Then, Chen $\emph{et al.}$~\cite{ChenSS} studied the global existence of shock front solution to the 2-dimensional axially symmetric piston problem for compressible Euler equations. In 2013, Ding and Li~\cite{Ding} investigated the global existence and non-relativistic limits of weak solutions for the one-dimensional piston problem of the isentropic relativistic Euler equations when the total variations of the initial data and the piston velocity were suitably small. Then, in 2018, Ding~\cite{DingM} explored the global existence of shock front solutions for the one-dimensional piston problem of the compressible Euler equations under the same conditions. In 2022, Lai~\cite{LaiG} conducted research on the global existence of shock front solutions for a spherical piston problem of the relativistic Euler equations.
%In 2025, Zhang and Yu~\cite{ZhangX} studied the existence and stability of the contact discontinuity to a piston problem for one-dimensional compressible Euler equations.

At the same time, extensive research also has been conducted on the exothermically reacting Euler equations. For weak entropy solutions in $BV$ or $L^{\infty}$ space, Chen and Wagner~\cite{Chen} delved into the global existence of entropy solutions for the one-dimensional compressible Euler equations involving a one-step exothermic chemical reaction. Then, Chen $\emph{et al.}$~\cite{ChenG} explored the global existence of entropy solutions of the two-dimensional steady exothermically reacting Euler equations. Subsequently, Chen $\emph{et al.}$~\cite{ChenGG} investigated the existence of global entropy solutions with a strong rarefaction wave to the two-dimensional steady supersonic reacting Euler flow past Lipschitz bending walls, under the condition that the total variation of both the initial data and the slope of the boundary is sufficiently small. Then, Xiang $\emph{et al.}$~\cite{Xiang} studied the existence of the global entropy solution containing a strong contact discontinuity under the same model and assumptions. Hu~\cite{Hu} focused on the stability and uniqueness of the global entropy solution for the Cauchy problem of the exothermically reacting compressible Euler equations. Regarding the piston problem, Kuang and Zhao~\cite{Kuang} explored the global existence and stability of entropy solutions including a strong shock front wave to one-dimensional piston problem of the exothermically reacting Euler equations. Then, Hu and Kuang~\cite{HuK} extended the result in~\cite{Kuang} to ZND combustion model. Besides, Lai~\cite{Lai} examined the global existence of the detonation wave solutions to a one-dimensional piston problem for the ZND combustion model with a one-step exothermic chemical reaction in $C^{1}$ space, where some dissipative conditions were imposed on the equations and the boundary. Fang $\emph{et al.}$~\cite{Fang} probed into the existence of transonic shocks for steady exothermically reacting Euler flows with a small exothermic reaction rate in an almost flat nozzle in a Sobolev space later. Most recently, Zhang~\cite{Zhang} investigated the existence and uniqueness of smooth subsonic flows for the three-dimensional steady ZND combustion equations in a cylindrical nozzle. Readers seeking more in-depth knowledge about the reacting dynamic theory are referred to the works cited in~\cite{Majda,Williams}.

In this paper, we reconsider the global existence of detonation wave solutions, which are piecewise smooth, to one-dimensional piston problem for the ZND combustion model.
Assume that the unburnt gas is at rest initially with a uniform state
\begin{align}
(\nu,u,p,Z)(0,x)=(\nu_{0},0,p_{0},1),\quad x>0,\label{a7}
\end{align}
where $\nu_{0}>\frac{\gamma+1}{\gamma}, p_{0}>0$ are some constants.
The boundary condition on $x=0$ is
\begin{align}
x=0:~~ u(t,0)=B'(t),\quad t\geq0,\label{a8}
\end{align}
where $B'(t)>0$ is the given function denoting the piston velocity.

The same problem had been studied in~\cite{Lai}, where the condition (A1): $$\nu_{p}+4\mathcal{A}_{p}\mu_{p}<1$$
was imposed to ensure the governing equations and boundary have some dissipative structure. Here, $\nu_p$ denotes the derivative of the Riemann invariant impinging on the shock along characteristics with respect to another Riemann invariant, evaluated at the piecewise constant shock solution state, $\mu_p$ represents the entropy derivative with respect to the same Riemann invariant at this solution state, while $\mathcal{A}_p$ is the coefficient of linear source terms in the governing equations at the aforementioned solution state. In the proof, the author diagonalized the equations via Riemann invariants and derived the uniform estimates of the solution through the diagonalized system using triangle inequalities. Since the source terms in the reduced system included linear terms, which made the author have to apply additional conditions to guarantee interior dissipation of the governing equations. In contrast, we instead consider the perturbed equations and left-multiply them by eigenvectors to derive another diagonalized system. Fortunately, the right-hand side of the resulting linearized diagonalized system is composed solely of nonlinear terms, which inherently offer sufficient dissipation for small perturbations. Furthermore, we exploit the Rankine-Hugoniot conditions to extract inherent dissipation at the shock wave, rendering the additional constraint (A1) unnecessary. Moreover, our method permits the piston velocity to be a time-dependent function $B'(t)$, while it must be a constant in~\cite{Lai}.

The subsequent arrangement of this paper is as follows: In~\secref{s2}, we elaborate in detail on the existence of piecewise constant shock wave solutions to the piston problem~\eqref{a1},\eqref{a7}-\eqref{a8} and present the main result of this paper. Subsequently, we carried out certain transformations on both the equations and the boundaries.~These transformations converted the equations into a diagonal form with dissipative structure on both boundaries.~In~\secref{s3}, we utilized local existence and a priori estimates to prove~\theref{t2}, which in turn led to the proof of~\theref{t1}.

\section{Problem presentation and main result}\label{s2}
\indent\indent In this section, we will give our main result and derive the equivalent system.

\subsection{Problem presentation}\label{ss1}

Let the shock curve be $x=\chi(t)$. Then, the Rankine-Hugoniot conditions across the shock are
\begin{align}\label{a9}
\left\{\begin{aligned}
&[u]=-\chi'(t)[\nu],\\
&[p]=\chi'(t)[u],\\
&[pu]=\chi'(t)[E],
\end{aligned}\right.
\end{align}
where $[h]$ stands for the jump of function $h$ across the shock wave. The Lax geometric entropy condition across the shock wave is (c.f.~\cite{Lai})
\begin{align}
\sqrt{-p_{\nu}(\nu_{0},s_{0})}<\chi'(t)<\sqrt{-p_{\nu}(\nu,s)},\label{a10}
\end{align}
where $s_{0}$ is the initial entropy, which can be determined by~\eqref{a4} and~\eqref{a7}.
%From the Rankine-Hugoniot conditions~\eqref{a9}, we have
%\begin{align}
%&u=-\omega(\nu-\nu_{0}),\label{bb1}\\
%&p-p_{0}=\omega u,\label{bb2}\\
%&pu=\omega\big(E(\nu,u,p)-E(\nu_{0},0,p_{0})\big),\label{bb3}
%\end{align}
%for the shock velocity $\omega>0$ and any given initial static condition $(\nu_{0},0,p_{0},1)$.
Obviously, we have $\nu<\nu_{0}$.
%Moreover, The relationships presented in~\eqref{a9} define a curve that is parameterized by a single variable in the $(\nu,u,p,Z)$-space. This curve is referred to as the shock curve and is symbolized by $\Xi$.

From (3.1) in~\cite{Lai}, we obtain
\begin{align}
\frac{du}{d\nu}|_{\Xi}<0,\quad \frac{de}{d\nu}|_{\Xi}=\frac{dT}{d\nu}|_{\Xi}<0.\label{b10}
\end{align}

Suppose that along the shock curve $x=\chi(t)$, $u=u_{o}$, and $p=p_{o}$ as $\nu=0$. Then, for any velocity $u_{\iota}\in(0, u_{o})$, there exist $\nu_{\iota}$ and $p_{\iota}$ such that the state $(\nu_{\iota}, u_{\iota}, p_{\iota})$ lies on the shock curve $x=\chi(t)$. As a consequence, the states $(\nu_{0},0,p_{0})$ and $(\nu_{\iota},u_{\iota},p_{\iota})$ can be connected by a forward shock wave. Next, assume that when $\nu = 1$ on the shock curve $x=\chi(t)$, we have $u=u_{1}$ and $p=p_{1}$. It follows from~\eqref{b10} that if $u_{\iota}>u_{1}$, we have $\nu_{\iota}<1$. In this paper, we assume $u_{\iota}$ satisfying
\begin{align}
u_{1}<u_{\iota}<u_{o}.\label{b11}
\end{align}
From~\eqref{b10}, we can infer that the temperature is increasing across the shock wave. If $T(\nu_{\iota},p_{\iota})\leq T_{i}$, then when the piston moves at a constant velocity $b_{0}=u_{\iota}$, the piston problem~\eqref{a1},\eqref{a7}-\eqref{a8} admits a constant shock wave solution $(\nu_{b},u_{b},p_{b},1)$ with a shock wave $x=\chi(t)\equiv\chi_{0}t$, which has the following form:
\begin{align}\label{b12}
(\nu_{b},u_{b},p_{b},1)=
\left\{\begin{aligned}
&(\nu_{\iota},u_{\iota},p_{\iota},1),\quad t>0,0<x<\chi_{0}t,\\
&(\nu_{0},0,p_{0},1),\quad t>0,x>\chi_{0}t,
\end{aligned}\right.
\end{align}
where
\begin{align}
\chi_{0}=\sqrt{-\frac{p_{\iota}-p_{0}}{\nu_{\iota}-\nu_{0}}}>\max\{b_{0}, c_{0}\}\label{BB12}
\end{align}
with $c_{0}=\sqrt{-p_{\nu}(\nu_{0},s_{0})}$.
We adopt this constant shock wave solution $(\nu_{b}, u_{b}, p_{b}, 1)$ as the background solution. Subsequently, we aim to investigate the shock wave solution of the piston problem~\eqref{a1},\eqref{a7}-\eqref{a8} when the piston velocity is a time-dependent function $B'(t)$.

On the other hand, if $T(\nu_{\iota}, p_{\iota})>T_{i}$, a combustion reaction will be triggered. In this case, we will search for a detonation wave solution of the piston problem~\eqref{a1},\eqref{a7}-\eqref{a8}.

Formally, our main result is encapsulated in the following theorem:
\begin{theorem}\label{t1}
There exist two constants $\hbar^{*}>0$ and $\epsilon_{0}>0$, such that for any given $\epsilon\in(0,\epsilon_{0})$ and any given $\hbar\in(0,\hbar^{*})$, if the piston velocity $B'(t)$ satisfies $B'(0)=b_{0}>0$ and
\begin{align}
\|B'(t)-b_{0}\|_{C^{1}}\leq\epsilon,\label{a11}
\end{align}
then  for any reaction rate $\kappa>0$, the piston problem~\eqref{a1},\eqref{a7}-\eqref{a8} admits a unique global in time shock or detonation wave solution $(\hat{\nu},\hat{u},\hat{p},\hat{Z})(t,x)$, which has the following form
\begin{align}\label{a12}
(\hat{\nu},\hat{u},\hat{p},\hat{Z})(t,x)=
\left\{\begin{aligned}
&(\nu,u,p,Z)(t,x),\quad (t,x)\in \hat{D}_{l}=\{(t,x)|t\in\mathbb{R}_{+}, 0\leq x<\chi(t)\},\\
&(\nu_{0},0,p_{0},1),\quad\quad~ (t,x)\in \hat{D}_{r}=\{(t,x)|t\in\mathbb{R}_{+}, x>\chi(t)\}.
\end{aligned}\right.
\end{align}
Moreover, this shock or detonation wave solution includes a shock wave $x=\chi(t)$ and satisfies the following estimate:
\begin{align}
\|(\nu,u,p)(t,x)-(\nu_{\iota},u_{\iota},p_{\iota})\|_{C^{1}(\hat{D}_{l})}&\leq C_{E}\epsilon,\label{a13}\\
\|Z(t,x)-1\|_{C^{1}(\hat{D}_{l})}&\leq C_{E},\label{a14}\\
\|\chi'(t)-\chi_{0}\|_{C^{1}(\mathbb{R}_{+})}&\leq C_{E}\epsilon,\label{a15}
\end{align}
where $C_{E}>0$ is a constant independent of $\epsilon$.
\end{theorem}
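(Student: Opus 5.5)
We will prove \theref{t1} by reducing it to a fixed-domain problem with a diagonal, dissipative structure, and then combining a local existence result with uniform a priori $C^{1}$ bounds and a continuation argument.

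\textbf{Reduction to a fixed domain.} We first straighten the free boundary. Set $y=x/\chi(t)$, so the region $\hat D_{l}$ becomes the fixed strip $\{t>0,\ 0<y<1\}$. The unknown shock speed $\chi'(t)$ is then eliminated from \eqref{a9} as an extra unknown function of $t$ alone. Since $\nu_\iota<1$, the state near the background lies in the branch $p=(\gamma+1)e^{s}-\gamma e^{s}\nu$ of \eqref{a4}, so the equation of state is explicit. We take $(\nu,u,s,Z)$ or $(p,u,s,Z)$ as unknowns and write the perturbation $U=(\nu-\nu_\iota,u-u_\iota,s-s_\iota)$ relative to the background \eqref{b12}.

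The key algebraic step is to write the first three equations of \eqref{a1} as $U_t+A(U,Z)U_x=F$, where $F$ comes from the reaction term. Energy conservation gives $e_t=-\hbar Z_t+\dots$, so the entropy satisfies $Ts_t=\hbar\kappa\psi(T)Z$, and this is the only source. We then left-multiply by the left eigenvectors $l_i$ of the background matrix $A(0,1)$, rather than using Riemann invariants. This gives $\partial_t w_i+\lambda_i\partial_y w_i=$ (quadratic terms in $U$, $U_y$) $+\ \hbar\kappa\,\psi(T)Z\,(\cdots)$. Here $\lambda_{1}<0=\lambda_2<\lambda_3$ in Lagrangian variables, with $\lambda_3<\chi_0$ after the rescaling. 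The reaction contribution carries the factor $\hbar$, and the source is $O(\hbar)$ uniformly because $0\le Z\le1$ and $Z$ solves an explicit ODE along $x=\mathrm{const}$: $Z=\exp(-\kappa\int\psi(T)\,dt)$. This is why $\hbar$ is taken small, and why \eqref{a14} gives only an $O(1)$ bound on $Z-1$.

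\textbf{Boundary structure.} At $y=0$, \eqref{a8} gives $w_3=$ function of $w_1$ plus $B'(t)-b_0$. At $y=1$, the Rankine--Hugoniot relations \eqref{a9}, linearized around $(\nu_\iota,u_\iota,p_\iota)$ with the right state fixed at $(\nu_0,0,p_0,1)$, express the incoming quantities $w_1$, $w_2$ and $\chi'-\chi_0$ in terms of the outgoing $w_3$. Note that $\lambda_2=0<\chi'$, so $w_2$ enters from the shock. We will exploit the Lax condition \eqref{a10} and \eqref{BB12} to show that the reflection coefficients satisfy $|\partial w_1/\partial w_3|\cdot|\partial w_3/\partial w_1|_{y=0}<1$. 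This is a genuine computation on the shock polar using \eqref{b10}, and it replaces (A1) of~\cite{Lai}.

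\textbf{A priori estimates.} Local existence of a piecewise $C^{1}$ solution on $[0,\delta]$ follows from the standard theory of free-boundary mixed problems for quasilinear hyperbolic systems, once compatibility at $(0,0)$ holds, which is ensured by $B'(0)=b_0$. For the global result we introduce the weighted norm $W(T)=\sup\|(w,w_y)\|+\sup|\chi'-\chi_0|$ and bootstrap $W\le C_E\epsilon$. Along each characteristic we integrate and bounce between the piston and the shock. The product of reflection coefficients being less than $1$ gives geometric decay of reflected contributions. The nonlinear terms, which are $O(W^{2})$, are integrable in time because the characteristics exit the strip in uniformly bounded time.

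\textbf{Main obstacle.} The hardest step is the reaction source, which is linear in $Z$ and does not decay. Because of it we must show that the $\hbar$-terms stay $\le C_E\epsilon$ rather than merely $O(\hbar)$. We will try to handle this using the explicit exponential decay of $Z$ once $T>T_i$ (since $\int_0^\infty \kappa\psi Z\,dt\le1$ independently of $\kappa$), so that the total source along any characteristic is bounded by $\hbar$ times the total burned fraction. Derivative estimates require differentiating $Z=\exp(\cdots)$ in $y$, which brings in $t\,\psi'(T)T_y$; we will control this by $\sup_t t\,e^{-\kappa\psi t}<\infty$ when $T$ stays near $T(\nu_\iota,p_\iota)>T_i$. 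If instead $T\le T_i$ throughout, then $Z\equiv1$ and the problem is a pure shock. Uniqueness follows from a standard contraction in $C^{0}$ for the difference of two solutions.
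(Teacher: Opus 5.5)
Your architecture is the paper's. You diagonalize with left eigenvectors instead of Riemann invariants. You compose the piston reflection $w_1\mapsto w_3$ (coefficient $1$) with the shock reflection $w_3\mapsto w_1$ (coefficient $|h_{1,0}|<1$); the paper encodes this by the rescaling $\hat\Phi_1=\Phi_1/\alpha$ with $|h_{1,0}|<\alpha<1$. You bounce backward along characteristics, and you use $Z=e^{-\kappa(t-\chi^{-1}(x))}$.

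The genuine gap is exactly the step you flag as the main obstacle, and your remedy does not close it. Bounding the source by $\hbar$ times the burned fraction gives a contribution $C\hbar$, not $C_E\epsilon$. No argument can do better: with $\hbar$ independent of $\epsilon$, the estimate \eqref{a13} is false in the detonation case.
\begin{itemize}
\item Take $B'\equiv b_0$. It satisfies \eqref{a11} for every $\epsilon$, so by uniqueness the solution is the same for all $\epsilon$.
\item Letting $\epsilon\to0$ in \eqref{a13} then forces $(\nu,u,p)\equiv(\nu_\iota,u_\iota,p_\iota)$.
\item Hence $s\equiv s_\iota$, since on the branch $\nu<1$ one has $s=\ln p-\ln(\gamma+1-\gamma\nu)$.
\item This contradicts $s_t=\kappa\hbar Z/T>0$ from \eqref{b13}.
\end{itemize}
Quantitatively, along each particle path $s$ rises by $\frac{\hbar}{T}\bigl(1-e^{-\kappa(t-\chi^{-1}(x))}\bigr)$ up to lower-order terms. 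Also $\partial_t s=\kappa\hbar/T$ on the shock. So the deviation is at least of order $\hbar$ in $C^0$ and $\kappa\hbar$ in $C^1$, however small $B'-b_0$ is.

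The paper's proof does not overcome this either. In \eqref{c9}--\eqref{c11} and \eqref{c23} it bounds the reaction integrals by $C\hbar$ just as you do, then writes $C\hbar\le M_1\epsilon$. That tacitly requires $\hbar\lesssim\epsilon$, and in fact $\kappa\hbar\lesssim\epsilon$ in \eqref{c23}. What is provable along these lines is the statement with $\hbar$ tied to $\epsilon$, or with $C_E\epsilon$ replaced by $C_E(\epsilon+\hbar)$ in \eqref{a13} and \eqref{a15}. Build that into your bootstrap rather than trying to make the $\hbar$-terms $O(\epsilon)$.

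A second concrete error is the claim that characteristics exit the strip in uniformly bounded time. The region $0\le x\le\chi(t)\approx\chi_0 t$ is a wedge. Going backward from $(t,x)$, the $3$-characteristic reaches the piston at $\tau_3\approx t-x/\lambda_b$. The next $1$-leg reaches the shock at $\tau_{3,1}\approx\lambda_b\tau_3/(\lambda_b+\chi_0)$. So each leg has length comparable to the current time; in your variable $y=x/\chi(t)$ the speeds are $(\lambda_i-y\chi')/\chi(t)=O(1/t)$. Consequences:
\begin{itemize}
\item Integrands of size $O(W^2)$ integrate to $O(W^2t)$ along a leg.
\item Your norm is unsuitable: $w_y=\chi(t)w_x$, and $w_x$ stays of size $\epsilon$ when $B''$ does, so $\sup|w_y|$ grows like $\epsilon t$.
\end{itemize}
You need an actual mechanism to control the coupling integrals $\int b(\Phi)\,\partial_i\Phi_j\,d\tau$ over legs of length $\sim t$. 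The paper does not provide one; it simply records them as $C\epsilon_1^2$ in \eqref{c9}--\eqref{c10}, so this cannot be imported from it.

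Finally, a sign slip: the Lax condition \eqref{a10} gives $\lambda_3=\sqrt{\gamma e^{s}}>\chi'$, not $\lambda_3<\chi_0$. It is $\lambda_3>\chi'$ that makes $w_3$ the quantity impinging on the shock, with $w_1$, $w_2$, $\chi'$ fixed by Rankine--Hugoniot as in your own boundary count. The paper uses $\lambda_b-\chi_0>0$ in \eqref{c19}.
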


\subsection{The derivation of equivalent systems}\label{ss2}
\indent\indent Since the gas is at rest and unburnt on the right side of the shock curve $x=\chi(t)$, we only need to discuss the state of gas on the left side of the shock curve $x=\chi(t)$. In the following derivations, we assume $\nu<1$.

By $E=\frac{1}{2}u^{2}+e+Z\hbar$ and~\eqref{a3}-\eqref{a5}, equations~\eqref{a1} can be rewritten as
\begin{align}\label{b13}
\left\{\begin{aligned}
&\nu_{t}-u_{x}=0,\\
&s_{t}=\frac{1}{T}\kappa\psi(T)Z\hbar,\\
&u_{t}-\gamma e^{s}\nu_{x}+\big((\gamma+1)e^{s}-\gamma e^{s}\nu\big)s_{x}=0.
\end{aligned}\right.
\end{align}
In accordance with~\cite{Lai}, for simplicity, we take $\psi(T)$ satisfying
\begin{align}\label{a2}
\psi(T)=
\left\{\begin{aligned}
&1, \quad T>T_{i},\\
&0,\quad T\leq T_{i}.
\end{aligned}\right.
\end{align}
\begin{remark}
It can be seen that when $T\leq T_{i}$, equations~\eqref{b13} become the homogeneous equations and the smallness of $\hbar$ is not required. Moreover, the research methods for the shock wave solutions in the two cases of $T\leq T_{i}$ and $T>T_{i}$ are similar. Therefore, we will only discuss the case of $T>T_{i}$ below.
\end{remark}

Define the perturbation
$$ \bar{\mathbf{\Phi}}=(\bar{\nu},\bar{s},\bar{u})^{\top}=(\nu-\nu_{\iota},s-s_{\iota},u-u_{\iota})^{\top}$$
with $s_{\iota}=\ln\big(\frac{p_{\iota}}{\gamma+1-\gamma\nu_{\iota}}\big)$,
we obtain
\begin{align}
\bar{\mathbf{\Phi}}_{t}+\bar{\Lambda}(\bar{\mathbf{\Phi}})\bar{\mathbf{\Phi}}_{x}=\bar{F}(\bar{\mathbf{\Phi}}),\label{b16}
\end{align}
where
\begin{align*}
\bar{\Lambda}(\bar{\mathbf{\Phi}})=
\begin{pmatrix}
&0 &0 &-1\\
&0 &0 &0\\
&-\gamma e^{s} &(\gamma+1)e^{s}-\gamma e^{s}\nu &1
\end{pmatrix},
\end{align*}
$$\bar{F}(\bar{\mathbf{\Phi}})=\Big(0,\frac{1}{T}\kappa Z\hbar,0\Big)^{\top}.$$
Noting that there exists a matrix
\begin{align*}
L=
\begin{pmatrix}
&-\sqrt{\gamma e^{s_{\iota}}} &\sqrt{\gamma e^{s_{\iota}}}(\frac{\gamma+1}{\gamma}-\nu_{\iota}) &-1\\
&0 &1 &0\\
&-\sqrt{\gamma e^{s_{\iota}}} &\sqrt{\gamma e^{s_{\iota}}}(\frac{\gamma+1}{\gamma}-\nu_{\iota}) &1
\end{pmatrix},
\end{align*}
such that $L\bar{\Lambda}(\mathbf{0})L^{-1}$ is a diagonal matrix. Then, let
\begin{align}
\mathbf{\Phi}=(\Phi_{1},\Phi_{2},\Phi_{3})^{\top}=L\bar{\mathbf{\Phi}},\label{BBB1}
\end{align}
system~\eqref{b16} becomes
\begin{align}
\mathbf{\Phi}_{t}+\Lambda(\mathbf{\Phi})\mathbf{\Phi}_{x}=F(\mathbf{\Phi}),\label{b17}
\end{align}
where
$$\Lambda(\mathbf{\Phi})=L\bar{\Lambda}(\bar{\mathbf{\Phi}})L^{-1},~~ F(\mathbf{\Phi})=L\bar{F}(\bar{\mathbf{\Phi}}).$$
Through simple mathematical calculations, we obtain that the eigenvalues of the matrix $\Lambda(\mathbf{\Phi})$ are
\begin{align}
\lambda_{1}=&\lambda_{1}(\Phi_{2})=-\sqrt{\gamma e^{\Phi_{2}+s_{\iota}}},\label{b18}\\
&\lambda_{2}=0,\label{b19}\\
\lambda_{3}=&\lambda_{3}(\Phi_{2})=\sqrt{\gamma e^{\Phi_{2}+s_{\iota}}}.\label{b20}
\end{align}
The left eigenvectors are
$$l_{1}=l_{1}(\mathbf{\Phi})=(1,b_{2},b_{3}),~~l_{2}=(0,1,0),~~l_{3}=l_{3}(\mathbf{\Phi})=(b_{3},b_{2},1),$$
where
\begin{align}
&b_{2}=b_{2}(\mathbf{\Phi})=\frac{\sqrt{\gamma e^{\Phi_{2}+s_{\iota}}}(\Phi_{1}+\Phi_{3})-2\sqrt{e^{s_{\iota}}}\sqrt{e^{\Phi_{2}+s_{\iota}}}\big((\gamma+1)-\gamma\nu_{\iota}\big)\Phi_{2}}{\sqrt{\gamma e^{\Phi_{2}+s_{\iota}}}+\sqrt{\gamma e^{s_{\iota}}}},\label{b24}\\
&b_{3}=b_{3}(\Phi_{2})=\frac{\sqrt{\gamma e^{\Phi_{2}+s_{\iota}}}-\sqrt{\gamma e^{s_{\iota}}}}{\sqrt{\gamma e^{\Phi_{2}+s_{\iota}}}+\sqrt{\gamma e^{s_{\iota}}}},\label{b25}
\end{align}
with
\begin{align}
b_{2}(\mathbf{0})=0,\quad b_{3}(0)=0.\label{b26}
\end{align}
By left-multiplying both sides of the system~\eqref{b17} by $l=(l_{1},l_{2},l_{3})^{\top}$ simultaneously, we get
\begin{align}
\frac{\partial\Phi_{1}}{\partial t}+\lambda_{1}(\Phi_{2})\frac{\partial\Phi_{1}}{\partial x}=&-b_{2}(\mathbf{\Phi})\big(\frac{\partial\Phi_{2}}{\partial t}+\lambda_{1}(\Phi_{2})\frac{\partial\Phi_{2}}{\partial x}\big)-b_{3}(\Phi_{2})\big(\frac{\partial\Phi_{3}}{\partial t}+\lambda_{1}(\Phi_{2})\frac{\partial\Phi_{3}}{\partial x}\big)\notag\\
&+\Big(\big(1+b_{3}(\Phi_{2})\big)\sqrt{\gamma e^{s_{\iota}}}\big(\frac{\gamma+1}{\gamma}-\nu_{\iota}\big)+b_{2}(\mathbf{\Phi})\Big)\frac{1}{T}\kappa Z\hbar,\label{b27}\\
\frac{\partial\Phi_{2}}{\partial t}=&\frac{1}{T}\kappa Z\hbar,\label{b28}\\
\frac{\partial\Phi_{3}}{\partial t}+\lambda_{3}(\Phi_{2})\frac{\partial\Phi_{3}}{\partial x}=&-b_{3}(\Phi_{2})\big(\frac{\partial\Phi_{1}}{\partial t}+\lambda_{3}(\Phi_{2})\frac{\partial\Phi_{1}}{\partial x}\big)-b_{2}(\mathbf{\Phi})\big(\frac{\partial\Phi_{2}}{\partial t}+\lambda_{3}(\Phi_{2})\frac{\partial\Phi_{2}}{\partial x}\big)\notag\\
&+\Big(\big(1+b_{3}(\Phi_{2})\big)\sqrt{\gamma e^{s_{\iota}}}\big(\frac{\gamma+1}{\gamma}-\nu_{\iota}\big)+b_{2}(\mathbf{\Phi})\Big)\frac{1}{T}\kappa Z\hbar.\label{b29}
\end{align}
The boundary condition~\eqref{a8} becomes
\begin{align}
x=0:~~\Phi_{3}(t,0)=\Phi_{1}(t,0)+2(B'(t)-b_{0}).\label{b31}
\end{align}
By using the Rankine-Hugoniot conditions~\eqref{a9}, we obtain the following boundary conditions on the shock wave $x=\chi(t)$
\begin{align}
x=\chi(t):~~&\Phi_{1}(t,\chi(t))=G_{1}\big(\Phi_{3}(t,\chi(t))\big),\label{b32}\\
&\Phi_{2}(t,\chi(t))=G_{2}\big(\Phi_{3}(t,\chi(t))\big),\label{b33}\\
&Z(t,\chi(t))=1.\label{b34}
\end{align}
The detailed proof of~\eqref{b32}-\eqref{b33} can be found in the following lemma.
\begin{lemma}
When $\gamma>1$ and $\nu_{0}>\frac{\gamma+1}{\gamma}$, there exist two functions $G_{1}(\Phi_{3})$ and $G_{2}(\Phi_{3})$, such that on the shock curve $x=\chi(t)$, we have
\begin{align}
\Phi_{1}(t,\chi(t))=G_{1}\big(\Phi_{3}(t,\chi(t))\big),\quad\Phi_{2}(t,\chi(t))=G_{2}\big(\Phi_{3}(t,\chi(t))\big),\label{b35}
\end{align}
which satisfy
\begin{align}
0<\Big|\frac{\partial G_{1}}{\partial\Phi_{3}}\big|_{\mathbf{\Phi}=0}\Big|<1.\label{bb35}
\end{align}
\end{lemma}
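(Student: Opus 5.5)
We use the implicit function theorem on the Rankine--Hugoniot system, with the unknowns taken to be the post-shock state and the shock speed. The right state is the fixed constant state $(\nu_{0},0,p_{0},1)$, and $Z=1$ on both sides at the shock, so the binding energy $Z\hbar$ cancels in $[E]$. System \eqref{a9} is then three equations in the four unknowns $(\nu,s,u,\chi')$, where the left state is $(\nu,s,u)$ with $p=(\gamma+1)e^{s}-\gamma e^{s}\nu$. Using \eqref{BBB1}, rewrite the unknowns as $(\Phi_{1},\Phi_{2},\Phi_{3},\chi')$. The plan is to solve for $(\Phi_{1},\Phi_{2},\chi')$ as $C^{1}$ functions of $\Phi_{3}$ near $(0,0,0,\chi_{0})$. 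This yields $G_{1},G_{2}$, and also $\chi'=G_{0}(\Phi_{3})$, which will be needed later for \eqref{a15}.

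\textbf{Step 1 (shock curve).} Parametrize the curve $\Xi$ by $\nu$. The first two relations in \eqref{a9} give $u=\omega(\nu_{0}-\nu)$ and $p-p_{0}=\omega u$, with $\omega=\chi'$. Eliminating $\omega$ gives $u^{2}=(p-p_{0})(\nu_{0}-\nu)$. The energy relation then reduces to the Hugoniot relation $e(\nu,s)-e_{0}+\tfrac12(p+p_{0})(\nu-\nu_{0})=0$, which determines $s=\sigma(\nu)$. Its solvability in $s$ near $(\nu_{\iota},s_{\iota})$ holds because $\partial_{s}$ of the left side equals $e+\tfrac12 p(\nu-\nu_{0})$. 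For $\nu<1$ this is $e^{s}\bigl[\tfrac{\gamma}{2}\nu^{2}-(\gamma+1)\nu+\tfrac{\gamma^{2}+\gamma}{2(\gamma-1)}+\tfrac12(\gamma+1-\gamma\nu)(\nu-\nu_{0})\bigr]$. We must check that this is nonzero using $\nu_{0}>\frac{\gamma+1}{\gamma}$; this is where that hypothesis enters. In this way $\Xi$ becomes a $C^{1}$ curve $\nu\mapsto(\nu,\sigma(\nu),u(\nu))$ through the background state, with tangent $(1,\sigma'(\nu_{\iota}),u'(\nu_{\iota}))$ satisfying $u'<0$ by \eqref{b10}.

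\textbf{Step 2 (inverting).} Along $\Xi$, set $\Phi_{k}=L_{k}\cdot(\nu-\nu_{\iota},\sigma(\nu)-s_{\iota},u(\nu)-u_{\iota})$. Write $a=\sqrt{\gamma e^{s_{\iota}}}$ and $\beta=a(\frac{\gamma+1}{\gamma}-\nu_{\iota})$. The derivatives at $\nu_{\iota}$ are
$$\frac{d\Phi_{1}}{d\nu}=-a+\beta\sigma'-u',\quad \frac{d\Phi_{3}}{d\nu}=-a+\beta\sigma'+u',\quad \frac{d\Phi_{2}}{d\nu}=\sigma'.$$
If $d\Phi_{3}/d\nu\neq0$, the inverse function theorem gives $\nu=\nu(\Phi_{3})$, and then $G_{1}=\Phi_{1}\circ\nu$ and $G_{2}=\Phi_{2}\circ\nu$, which is \eqref{b35}. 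Moreover
$$\partial_{\Phi_{3}}G_{1}(0)=\frac{-a+\beta\sigma'-u'}{-a+\beta\sigma'+u'}.$$
Put $A=-a+\beta\sigma'$ and $U=u'<0$. Then \eqref{bb35} is equivalent to $A-U\neq0$ and $|A-U|<|A+U|$. The latter holds exactly when $AU>0$, that is, when $A<0$, and then $A+U<0$ automatically. So the whole lemma reduces to the single sign condition
$$\beta\,\sigma'(\nu_{\iota})<a,$$
since $A-U\neq0$ is also needed for strict positivity of $|\partial G_{1}/\partial\Phi_{3}|$.

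\textbf{Step 3 (the sign condition; main obstacle).} Compute $\sigma'(\nu_{\iota})$ by differentiating the Hugoniot relation. Using $de=Tds-pd\nu$ and $T=e$, one gets
$$\sigma'=\frac{\tfrac12(p-p_{0})-\tfrac12(\nu-\nu_{0})p_{\nu}}{T+\tfrac12(\nu-\nu_{0})p_{s}}.$$
Here $p_{\nu}=-\gamma e^{s}=-a^{2}$ and $p_{s}=p$. Note also $\beta=p_{s}/a$ up to the factor $\gamma$, because $p=(\gamma+1-\gamma\nu)e^{s}$ gives $\beta a=p$. The condition $\beta\sigma'<a$ then reads $p\,\sigma'<a^{2}$, an explicit algebraic inequality in $\nu_{\iota}\in(0,1)$, $\nu_{0}$ and $\gamma$. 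I would try to verify it using the entropy/Lax condition \eqref{a10} and \eqref{BBB12}, in the form $\chi_{0}^{2}=(p-p_{0})/(\nu_{0}-\nu)<a^{2}$. After substituting, the numerator becomes $\tfrac12(\nu_{0}-\nu)(\chi_{0}^{2}-a^{2})<0$ wherever the denominator is positive. This would give $\sigma'<0$ or $\sigma'>0$ according to the sign of the denominator, and the goal is to show that $p\sigma'<a^{2}$ in either case. The sign of the denominator, which is exactly the quantity from Step 1, is the point where $\nu_{0}>\frac{\gamma+1}{\gamma}$ must be used carefully. I expect this bookkeeping to be the real difficulty. Finally, $A\neq U$ follows from $A<0<-U$, which completes the proof.
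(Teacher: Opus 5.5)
Your reduction is correct, and parametrizing $\Xi$ by $\nu$ so that $\partial_{\Phi_3}G_1(0)=(A-U)/(A+U)$ is more transparent than the paper's route. The paper takes a Cramer-rule quotient of the $2\times2$ Jacobians $k_{ij},\tilde k_{11}$ and then simply asserts $|h_{1,0}|<1$.

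However, your argument stops at the decisive point. Step~3 is only a plan, and you leave open the sign of the denominator $T+\tfrac12 p(\nu_\iota-\nu_0)$, even allowing $\sigma'>0$. In that case $p\sigma'<a^2$ would need a quantitative comparison that you do not supply. The missing observation is that the Hugoniot relation itself gives
\[
T+\tfrac12 p\,(\nu_\iota-\nu_0)=e_0+\tfrac12 p_0\,(\nu_0-\nu_\iota)>0 .
\]
This uses nothing about $\nu_0>\frac{\gamma+1}{\gamma}$, so that hypothesis does not enter where you placed it. Combined with your numerator $\tfrac12(\nu_0-\nu_\iota)(\chi_0^2-a^2)<0$ from the Lax condition, it gives $\sigma'(\nu_\iota)<0$. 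Your constant $\beta=a(\frac{\gamma+1}{\gamma}-\nu_\iota)$ is positive for $\nu_\iota<1$, so $\beta\sigma'<0<a$, i.e. $A<-a<0$. The same identity settles the solvability in Step~1, and $A<0$ gives $d\Phi_3/d\nu=A+U<0$ for the inverse function theorem. Without this step, $|\partial_{\Phi_3}G_1(0)|<1$ is not proved.

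Your last sentence is a non sequitur. $A<0<-U$ only says that $A$ and $U$ are both negative, which does not exclude $A=U$. Writing $X=a^2-p\sigma'$, you have $A=-X/a$ and $U=-(X+\chi_0^2)/(2\chi_0)$, so
\[
\frac{\partial G_1}{\partial\Phi_3}\Big|_{\mathbf{\Phi}=0}=\frac{2\chi_0X-a\,(X+\chi_0^2)}{2\chi_0X+a\,(X+\chi_0^2)} .
\]
The numerator genuinely changes sign. For $\gamma=1.4$ and $\nu_0=1.8$ (any $p_0$), this quantity is $\approx 0.0045$ at $\nu_\iota=0.82$ and $\approx -0.0070$ at $\nu_\iota=0.95$. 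The paper's $k_{ij}$ formulas give the same values, and both background states satisfy $\chi_0>\max\{b_0,c_0\}$. So the derivative vanishes for some intermediate $\nu_\iota$.

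Hence the strict lower bound $0<|\partial G_1/\partial\Phi_3|$ cannot be proved in general. The paper's proof does not justify it either, and only $|h_{1,0}|<1$ is used later, to choose $|h_{1,0}|<\alpha<1$ in the scaling. Drop the claim $A\neq U$ rather than trying to repair it.
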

\begin{proof}
By the Rankine-Hugoniot conditions~\eqref{a9}, we get
\begin{align}
&J_{1}(\nu,s,u)=u-\sqrt{-\big((\gamma+1)e^{s}-\gamma e^{s}\nu-e^{s_{0}}\nu_{0}^{-\gamma}\big)(\nu-\nu_{0})}=0,\label{b36}\\
&J_{2}(\nu,s,u)=\frac{1}{2}e^{s}\Big((\gamma+1)(\nu+\nu_{0})-\frac{\gamma^{2}+\gamma}{\gamma-1}-\gamma\nu\nu_{0}\Big)+e^{s_{0}}
\nu_{0}^{-\gamma}\Big(\frac{\gamma+1}{2(\gamma-1)}\nu_{0}-\frac{1}{2}\nu\Big)=0.\label{b37}
\end{align}
Then
\begin{align}\label{b38}
\frac{\partial(J_{1},J_{2})}{\partial(\nu,s,u)}\Big|_{\bar{\mathbf{\Phi}}=0}=
\begin{pmatrix}
&\frac{\gamma e^{s_{\iota}}}{2\chi_{0}}+\frac{\chi_{0}}{2} &-\frac{(\gamma+1-\gamma\nu_{\iota})e^{s_{\iota}}}{2\chi_{0}} &1\\
&\frac{1}{2}(\gamma+1-\gamma\nu_{0})e^{s_{\iota}}-\frac{1}{2}e^{s_{0}}\nu_{0}^{-\gamma} &-e^{s_{0}}\nu_{0}^{-\gamma}\Big(\frac{\gamma+1}{2(\gamma-1)}\nu_{0}-\frac{1}{2}\nu_{\iota}\Big) &0
\end{pmatrix},
\end{align}
where
$$\chi_{0}=\sqrt{-\frac{(\gamma+1)e^{s_{\iota}}-\gamma e^{s_{\iota}}\nu_{\iota}-e^{s_{0}}\nu_{0}^{-\gamma}}{\nu_{\iota}-\nu_{0}}}.$$
On the other hand, it follows from~\eqref{BBB1}
\begin{align}
&\nu(\mathbf{\Phi})=-\frac{1}{2\sqrt{\gamma e^{s_{\iota}}}}(\Phi_{1}+\Phi_{3})+\Big(\frac{\gamma+1}{\gamma}-\nu_{\iota}\Big)\Phi_{2}+\nu_{\iota},\label{b21}\\
&s(\mathbf{\Phi})=\Phi_{2}+s_{\iota},\label{b22}\\
&u(\mathbf{\Phi})=\frac{1}{2}(\Phi_{3}-\Phi_{1})+u_{\iota}.\label{b23}
\end{align}
which means
\begin{align}\label{b42}
\frac{\partial(\nu,s,u)}{\partial(\Phi_{1},\Phi_{2})}\Big|_{\mathbf{\Phi}=0}=
\begin{pmatrix}
&-\frac{1}{2\sqrt{\gamma e^{s_{\iota}}}} &\frac{\gamma+1}{\gamma}-\nu_{\iota}\\\
&0 &1\\
&-\frac{1}{2} &0
\end{pmatrix}.
\end{align}
Denote
\begin{align*}
&K_{1}(\Phi_{1},\Phi_{2},\Phi_{3})=J_{1}(\nu(\mathbf{\Phi}),s(\mathbf{\Phi}),u(\mathbf{\Phi})),\\
&K_{2}(\Phi_{1},\Phi_{2},\Phi_{3})=J_{2}(\nu(\mathbf{\Phi}),s(\mathbf{\Phi}),u(\mathbf{\Phi})),
\end{align*}
Here, $\nu,s,u$ and $\mathbf{\Phi}$ are all at $(t,\chi(t))$. For the sake of simplicity, we omit writing it.

Then, combining with~\eqref{b38} and~\eqref{b42}, we have
\begin{align*}
&\frac{\partial(K_{1},K_{2})}{\partial(\Phi_{1},\Phi_{2})}\Big|_{\mathbf{\Phi}=0}=\frac{\partial(J_{1},J_{2})}
{\partial(\nu,s,u)}\Big|_{\bar{\mathbf{\Phi}}=0}\cdot\frac{\partial(\nu,s,u)}{\partial(\Phi_{1},\Phi_{2})}\Big|_{\mathbf{\Phi}=0}
=\big(k_{ij}\big)_{2\times2},
\end{align*}
where
\begin{align*}
&k_{11}=-\frac{\sqrt{\gamma e^{s_{\iota}}}}{4\chi_{0}}-\frac{\chi_{0}}{4\sqrt{\gamma e^{s_{\iota}}}}-\frac{1}{2},\\
&k_{12}=\Big(\frac{\gamma+1}{2\gamma}-\frac{\nu_{\iota}}{2}\Big)\chi_{0},\\
&k_{21}=-\frac{1}{4\sqrt{\gamma e^{s_{\iota}}}}\big((\gamma+1-\gamma\nu_{0})e^{s_{\iota}}-e^{s_{0}}\nu_{0}^{-\gamma}\big),\\
&k_{22}=\Big(\frac{\gamma+1}{2\gamma}-\frac{\nu_{\iota}}{2}\Big)\big((\gamma+1-\gamma\nu_{0})e^{s_{\iota}}-e^{s_{0}}\nu_{0}^{-\gamma}\big)
-e^{s_{0}}\nu_{0}^{-\gamma}\frac{(\gamma+1)\nu_{0}-(\gamma-1)\nu_{\iota}}{2(\gamma-1)}.
\end{align*}
Thus, by $\gamma>1$, $\nu_{0}>\frac{\gamma+1}{\gamma}$ and $\nu_{\iota}<1$, we have
\begin{align}
&\det\Big(\frac{\partial(K_{1},K_{2})}{\partial(\Phi_{1},\Phi_{2})}\Big|_{\mathbf{\Phi}=0}\Big)=k_{11}k_{22}-k_{12}k_{21}\notag\\
=&-\frac{\gamma+1-\gamma\nu_{\iota}}{2\gamma}\big(\frac{\sqrt{\gamma e^{s_{\iota}}}}{4\chi_{0}}+\frac{1}{2}\big)\big((\gamma+1-\gamma\nu_{0})e^{s_{\iota}}-e^{s_{0}}
\nu_{0}^{-\gamma}\big)\notag\\\
&+e^{s_{0}}\nu_{0}^{-\gamma}\frac{(\gamma+1)\nu_{0}-(\gamma-1)\nu_{\iota}}{2(\gamma-1)}\frac{(\sqrt{\gamma e^{s_{\iota}}}+\chi_{0})^{2}}{4\chi_{0}\sqrt{\gamma e^{s_{\iota}}}}>0,\label{b43}
\end{align}
which indicates that we can employ the existence theorem of the implicit function to obtain~\eqref{b35}.

Similarly, we have
\begin{align*}
\frac{\partial(K_{1},K_{2})}{\partial(\Phi_{3},\Phi_{2})}\Big|_{\mathbf{\Phi}=0}=\frac{\partial(J_{1},J_{2})}
{\partial(\nu,s,u)}\Big|_{\bar{\mathbf{\Phi}}=0}\cdot\frac{\partial(\nu,s,u)}{\partial(\Phi_{3},\Phi_{2})}\Big|_{\mathbf{\Phi}=0}
=\big(\tilde{k}_{ij}\big)_{2\times2}
\end{align*}
with
$$
\tilde{k}_{11}=-\frac{\sqrt{\gamma e^{s_{\iota}}}}{4\chi_{0}}-\frac{\chi_{0}}{4\sqrt{\gamma e^{s_{\iota}}}}+\frac{1}{2},~~\tilde{k}_{12}=k_{12},~~ \tilde{k}_{21}=k_{21},~~ \tilde{k}_{22}=k_{22},$$
and
\begin{align*}
\frac{\partial(K_{1},K_{2})}{\partial(\Phi_{1},\Phi_{3})}\Big|_{\mathbf{\Phi}=0}=\frac{\partial(J_{1},J_{2})}
{\partial(\nu,s,u)}\Big|_{\bar{\mathbf{\Phi}}=0}\cdot\frac{\partial(\nu,s,u)}{\partial(\Phi_{1},\Phi_{3})}\Big|_{\mathbf{\Phi}=0}
=\big(\tilde{\tilde{k}}_{ij}\big)_{2\times2}
\end{align*}
with
$$ \tilde{\tilde{k}}_{11}=k_{11},~~, \tilde{\tilde{k}}_{12}=\tilde{k}_{11},~~\tilde{\tilde{k}}_{21}=k_{21},~~\tilde{\tilde{k}}_{22}=k_{21}.$$
Furthermore, we obtain
\begin{align}
&\det\Big(\frac{\partial(K_{1},K_{2})}{\partial(\Phi_{3},\Phi_{2})}\Big|_{\mathbf{\Phi}=0}\Big)=\tilde{k}_{11}k_{22}-k_{12}k_{21},\label{b44}\\
&\det\Big(\frac{\partial(K_{1},K_{2})}{\partial(\Phi_{1},\Phi_{3})}\Big|_{\mathbf{\Phi}=0}\Big)=-k_{21}.\label{b45}
\end{align}
Finally, it follows from~\eqref{b43}-\eqref{b45}
\begin{align}
&\frac{\partial G_{1}}{\partial\Phi_{3}}\big|_{\mathbf{\Phi}=0}=-\det\Big(\frac{\partial(K_{1},K_{2})}{\partial(\Phi_{3},\Phi_{2})}\Big|_{\mathbf{\Phi}=0}\Big)\cdot
\det\Big(\frac{\partial(K_{1},K_{2})}{\partial(\Phi_{1},\Phi_{2})}\Big|_{\mathbf{\Phi}=0}\Big)^{-1}
=-\frac{\tilde{k}_{11}k_{22}-k_{12}k_{21}}{k_{11}k_{22}-k_{12}k_{21}}
\mathop{=}\limits^{\triangle}h_{1,0},\label{b46}\\
&\frac{\partial G_{2}}{\partial\Phi_{3}}\big|_{\mathbf{\Phi}=0}=-\det\Big(\frac{\partial(K_{1},K_{2})}{\partial(\Phi_{1},\Phi_{3})}\Big|_{\mathbf{\Phi}=0}\Big)\cdot
\det\Big(\frac{\partial(K_{1},K_{2})}{\partial(\Phi_{1},\Phi_{2})}\Big|_{\mathbf{\Phi}=0}\Big)^{-1}=\frac{k_{21}}{k_{11}k_{22}-k_{12}k_{21}}
\mathop{=}\limits^{\triangle}h_{2,0}.\label{b47}
\end{align}
From~\eqref{a10}, \eqref{b46}, and the definition of $k_{ij}(i,j=1,2)$ and $\tilde{k}_{11}$, we have
\begin{align}
|h_{1,0}|<1.\label{b48}
\end{align}
Thus, we proved~\eqref{bb35}.

\end{proof}

By the Rankine-Hugoniot conditions~\eqref{a9}, we can calculate the equations satisfied by the shock curve $x=\chi(t)$ as follows
\begin{align}\label{b49}
\left\{\begin{aligned}
&\frac{d\chi(t)}{dt}=F(\mathbf{\Phi}(t,\chi(t))),\\
&\chi(0)=0,
\end{aligned}\right.
\end{align}
where
\begin{align*}
F(\mathbf{\Phi})=&\sqrt{-\frac{(\gamma+1)e^{\Phi_{2}+s_{\iota}}-\gamma e^{\Phi_{2}+s_{\iota}}\nu(\mathbf{\Phi})-e^{s_{0}}\nu_{0}^{-\gamma}}{\nu(\mathbf{\Phi})-\nu_{0}}}.
\end{align*}
By the equation~$\eqref{a1}_{4}$, \eqref{a2} and the boundary condition~\eqref{b34}, we get
\begin{align}
Z=e^{-\kappa\big(t-\chi^{-1}(x)\big)}<1,\label{BBB49}
\end{align}
where $t=\chi^{-1}(x)$ is the inverse function of $x=\chi(t)$.

Next, we introduce the scaling transformation
\begin{align}
\hat{\mathbf{\Phi}}=(\hat{\Phi}_{1},\hat{\Phi}_{2},\hat{\Phi}_{3})^{\top}=(\frac{1}{\alpha}\Phi_{1},\beta\Phi_{2},\Phi_{3})^{\top},\label{bb58}
\end{align}
with
\begin{align}
|h_{1,0}|<\alpha<1,\quad 0<\beta<\big(|h_{2,0}|\big)^{-1},\label{b58}
\end{align}
then the free boundary value problems~\eqref{b27}-\eqref{b33} become
\begin{align}
\frac{\partial\hat{\Phi}_{1}}{\partial t}+\lambda_{1}(\hat{\Phi}_{2})\frac{\partial\hat{\Phi}_{1}}{\partial x}=&-\frac{1}{\alpha\beta}b_{2}(\hat{\mathbf{\Phi}})\big(\frac{\partial\hat{\Phi}_{2}}{\partial t}+\lambda_{1}(\hat{\Phi}_{2})\frac{\partial\hat{\Phi}_{2}}{\partial x}\big)-\frac{1}{\alpha}b_{3}(\hat{\Phi}_{2})\big(\frac{\partial\hat{\Phi}_{3}}{\partial t}+\lambda_{1}(\hat{\Phi}_{2})\frac{\partial\hat{\Phi}_{3}}{\partial x}\big)\notag\\
&+\frac{1}{\alpha}\Big(\big(1+b_{3}(\hat{\Phi}_{2})\big)\sqrt{\gamma e^{s_{\iota}}}\big(\frac{\gamma+1}{\gamma}-\nu_{\iota}\big)+b_{2}(\hat{\mathbf{\Phi}})\Big)\frac{1}{T}\kappa\hbar e^{-\kappa\big(t-\chi^{-1}(x)\big)},\label{b50}\\
\frac{\partial\hat{\Phi}_{2}}{\partial t}=&\beta\frac{1}{T}\kappa\hbar e^{-\kappa\big(t-\chi^{-1}(x)\big)},\label{b51}\\
\frac{\partial\hat{\Phi}_{3}}{\partial t}+\lambda_{3}(\hat{\Phi}_{2})\frac{\partial\hat{\Phi}_{3}}{\partial x}=&-\alpha b_{3}(\hat{\Phi}_{2})\big(\frac{\partial\hat{\Phi}_{1}}{\partial t}+\lambda_{3}(\hat{\Phi}_{2})\frac{\partial\hat{\Phi}_{1}}{\partial x}\big)-\frac{1}{\beta}b_{2}(\hat{\mathbf{\Phi}})\big(\frac{\partial\hat{\Phi}_{2}}{\partial t}+\lambda_{3}(\hat{\Phi}_{2})\frac{\partial\hat{\Phi}_{2}}{\partial x}\big)\notag\\
&+\Big(\big(1+b_{3}(\hat{\Phi}_{2})\big)\sqrt{\gamma e^{s_{\iota}}}\big(\frac{\gamma+1}{\gamma}-\nu_{\iota}\big)+b_{2}(\hat{\mathbf{\Phi}})\Big)\frac{1}{T}\kappa\hbar e^{-\kappa\big(t-\chi^{-1}(x)\big)}.\label{b52}
\end{align}
\begin{align}
x=0:~~&\hat{\Phi}_{3}(t,0)=\alpha\hat{\Phi}_{1}(t,0)+2(B'(t)-b_{0}),\label{b54}
\end{align}
and
\begin{align}
x=\chi(t):~~&\hat{\Phi}_{1}(t,\chi(t))=\frac{1}{\alpha}G_{1}\big(\hat{\Phi}_{3}(t,\chi(t))\big),\label{b55}\\
&\hat{\Phi}_{2}(t,\chi(t))=\beta G_{2}\big(\hat{\Phi}_{3}(t,\chi(t))\big).\label{b56}
\end{align}

Besides, the equations~\eqref{b49} become
\begin{align}\label{BB49}
\left\{\begin{aligned}
&\frac{d\chi(t)}{dt}=\hat{F}(\hat{\mathbf{\Phi}}(t,\chi(t))),\\
&\chi(0)=0,
\end{aligned}\right.
\end{align}
where $\hat{F}(\hat{\mathbf{\Phi}})=\sqrt{-\frac{(\gamma+1)e^{\frac{1}{\beta}\hat{\Phi}_{2}+s_{\iota}}-\gamma e^{\frac{1}{\beta}\hat{\Phi}_{2}+s_{\iota}}\nu(\hat{\mathbf{\Phi}})-e^{s_{0}}\nu_{0}^{-\gamma}}{\nu(\hat{\mathbf{\Phi}})-\nu_{0}}}$ with
$\nu(\hat{\mathbf{\Phi}})=-\frac{1}{2\sqrt{\gamma e^{s_{\iota}}}}(\alpha\hat{\Phi}_{1}+\hat{\Phi}_{3})+(\frac{\gamma+1}{\gamma}-\nu_{\iota})\frac{1}{\beta}\hat{\Phi}_{2}+\nu_{\iota}$.

\begin{remark}
The estimate~\eqref{bb35} captures an inherent dissipative property of the solution at the shock wave and is essential for establishing uniform a priori estimates in the subsequent proof.
\end{remark}

\begin{remark}
The scaling transformation~\eqref{bb58}, in conjunction with~\eqref{b58} and boundary conditions~\eqref{b54}--\eqref{b56}, reveals that both boundaries of the domain $\hat{D}_{l}=\{(t,x)|t\in\mathbb{R}_{+},0\leq x\leq\chi(t)\}$ possess dissipative structures.
\end{remark}

\section{Existence of the detonation wave solution}\label{s3}
\indent\indent In this section, we will give the proof of~\theref{t1}. In fact, through the preliminaries in~\secref{s2}, we know that to prove~\theref{t1}, we only need to prove the following theorem.
\begin{theorem}\label{t2}
There exists a sufficiently small constant $\hbar^{*}>0$ and a suitably small constant $\epsilon_{1}\in(0,\epsilon_{0})$, such that for any given $\epsilon\in(0,\epsilon_{1})$ and any given $\hbar\in(0,\hbar^{*})$, if the piston velocity $B'(t)$ satisfies $B'(0)=b_{0}$ and~\eqref{a11}, the free boundary value problems~\eqref{b50}-\eqref{b56} admit a unique global $C^{1}$ solution $\hat{\mathbf{\Phi}}=\hat{\mathbf{\Phi}}(t,x)$ satisfying on the angular domain $\hat{D}_{l}$
\begin{align}
&\|\hat{\mathbf{\Phi}}(t,x)\|_{C^{1}(\hat{D}_{l})}\leq M\epsilon. \label{c1}
\end{align}
Moreover, the shock curve $x=\chi(t)$ is a $C^{2}$ function satisfying
\begin{align}
\|\chi'(t)-\chi_{0}\|_{C^{1}(\mathbb{R}_{+})}\leq M_{F}\epsilon,\label{c2}
\end{align}
where $M, M_{F}$ are two positive constants, which is independent of $\epsilon$.
\end{theorem}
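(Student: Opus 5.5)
The argument combines local existence with a uniform a priori estimate and closes by continuation. First, I will obtain a local $C^{1}$ solution of the free boundary problem \eqref{b50}--\eqref{b56}, \eqref{BB49} on some interval $[0,T_{0}]$. I plan to cite the classical local theory for quasilinear hyperbolic free boundary problems of Li Ta-tsien type. Two facts make it applicable. First, the characteristic speeds satisfy $\lambda_{1}<0=\lambda_{2}<\lambda_{3}$. Second, \eqref{BBB12} gives $\chi_{0}>\lambda_{3}(0)$, so exactly one characteristic family, $\hat\Phi_{3}$, leaves the piston boundary $x=0$, while at the shock $\hat\Phi_{1}$ and $\hat\Phi_{2}$ are prescribed in terms of the outgoing $\hat\Phi_{3}$. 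The $C^{1}$ compatibility conditions at the corner $(0,0)$ hold because $B'(0)=b_{0}$ and the data at $t=0$ are the background state. The remaining task is to show that, under the bootstrap hypothesis $\|\hat{\mathbf\Phi}\|_{C^{1}(\hat D_{l}\cap\{t\le T\})}\le 2M\epsilon$ and $\|\chi'-\chi_{0}\|_{C^{1}([0,T])}\le 2M_{F}\epsilon$, one in fact has the bounds \eqref{c1}--\eqref{c2} with constants $M\epsilon$ and $M_{F}\epsilon$ independent of $T$.

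For the $C^{0}$ estimate I define
\[
W_{i}(T)=\sup_{(t,x)\in\hat D_{l},\,t\le T}|\hat\Phi_{i}(t,x)|.
\]
I then integrate \eqref{b50} along the 1-characteristics back to the shock, and \eqref{b52} along the 3-characteristics back to the piston. The point of the left-eigenvector diagonalization is that the coupling terms carry the factors $b_{2}(\hat{\mathbf\Phi})$ and $b_{3}(\hat\Phi_{2})$, which by \eqref{b26} are $O(\epsilon)$. These factors multiply directional derivatives that are also $O(\epsilon)$, so the coupling contributes only $O(\epsilon^{2})$ per unit time. The difficulty is that characteristics have unbounded length in time, so these terms must be controlled after integration. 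I will handle this by rewriting $\partial_t+\lambda_1\partial_x$ of $\hat\Phi_3$ as the derivative along the 1-characteristic, integrating by parts, and picking up terms of the form $b_3\hat\Phi_3$, which are $O(\epsilon^{2})$, plus integrals of $\partial b_{3}\,\partial\hat\Phi_{2}$. Here $\hat\Phi_{2}$ evolves only through the source term.

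The source terms carry the factor $\hbar e^{-\kappa(t-\chi^{-1}(x))}$. Along a 1- or 3-characteristic, $t-\chi^{-1}(x)$ grows linearly in time since $\chi_{0}-\lambda_{3}>0$, so their time integrals are bounded by $C\hbar$. Taking $\hbar^{*}$ small relative to $\epsilon$ (say $\hbar\le\epsilon^{2}$, or else absorbing $\hbar$ into the smallness by requiring $C\hbar^{*}\le\epsilon$ — I must check which choice the statement permits) makes these contributions harmless. By \eqref{b51}, $\hat\Phi_{2}(t,x)=\hat\Phi_{2}(\chi^{-1}(x),x)+O(\hbar)$, and \eqref{b56} gives $W_{2}\le\beta|h_{2,0}|W_{3}+C\epsilon^{2}+C\hbar$.

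The boundary relations then close the $C^0$ bound. At the piston, \eqref{b54} gives $|\hat\Phi_3|\le\alpha W_1+2\epsilon$. At the shock, \eqref{b55} gives $|\hat\Phi_1|\le(|h_{1,0}|/\alpha+C\epsilon)W_3$. Chaining the two through the characteristics yields
\[
W_{3}\le \alpha\cdot\frac{|h_{1,0}|}{\alpha}W_{3}+2\epsilon+C(\epsilon^{2}+\hbar).
\]
Since $|h_{1,0}|<1$ by \eqref{b48}, the reflection loop is a contraction, and this gives $W_{1},W_{2},W_{3}\le C\epsilon$.

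For the $C^{1}$ estimate I will differentiate the system and obtain equations for $\partial_{x}\hat\Phi_{i}$, or better for the directional derivatives. Boundary values of the derivatives follow from differentiating \eqref{b54}--\eqref{b56} in $t$ along the boundaries. This brings in $B''$, which is bounded by $\epsilon$ through \eqref{a11}, and $\chi''$. The $\chi''$ terms are controlled by differentiating \eqref{BB49}, so they are $O(\epsilon)\cdot|\partial\hat{\mathbf\Phi}|$ and perturb the reflection coefficients only by $O(\epsilon)$. The same contraction $|h_{1,0}|<\alpha<1$ then closes the estimate, with a Riccati-type product $\partial\hat\Phi_i\,\partial\hat\Phi_j$ as the main nonlinear term, which is $O(\epsilon^2)$ pointwise.

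\textbf{Main obstacle.} I expect the main obstacle to be controlling the time-integrated quadratic terms uniformly over infinite time, since they are not obviously integrable. I will first try to use the decay of interactions between transversal characteristic families, the Li Ta-tsien weak-coupling technique. The key fact is that a 1-characteristic crosses the region where the 3-waves are nontrivial in bounded time, because the speeds differ. Since the domain is a wedge and the waves bounce between the two boundaries, I will rely instead on the boundary contraction: it makes the reflected amplitude after $n$ reflections decay geometrically as $\alpha^{n}$, while the interior $O(\epsilon^{2})$ contributions per reflection are summable. Finally, \eqref{c2} follows from \eqref{BB49} by Taylor expansion of $\hat F$, together with $\chi''=\nabla\hat F\cdot\frac{d}{dt}\hat{\mathbf\Phi}(t,\chi(t))$. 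Uniqueness follows from a standard $C^{0}$ contraction on differences.
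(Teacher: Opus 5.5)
Your overall route is the paper's. It uses local existence, integration of \eqref{b50}--\eqref{b52} along characteristics, and a reflection loop closed by $|h_{1,0}|<\alpha<1$ at the shock and the factor $\alpha$ at the piston. Sources are bounded by $C\hbar$ through $e^{-\kappa(t-\chi^{-1}(x))}$, derivatives are handled via $\partial_t$, and \eqref{c2} comes from \eqref{BB49}.

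The genuine gap is the step you single out as the main obstacle: your proposed fix does not work. In the wedge, the legs of the zigzag have unbounded length. With $\lambda_b=\sqrt{\gamma e^{s_\iota}}$, a $3$-characteristic leaving the piston at time $\tau$ meets the shock at time $\approx\frac{\lambda_b}{\lambda_b-\chi_0}\tau$. A $1$-characteristic leaving the shock at time $\tau$ meets the piston at time $\approx\frac{\lambda_b+\chi_0}{\lambda_b}\tau$. Your bootstrap gives only non-decaying bounds $|b_2|,|b_3|,|\partial\hat{\mathbf{\Phi}}|\le C\epsilon$, and the piston forcing $2(B'(t)-b_0)$ need not decay. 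So the quadratic integral over the leg ending at $(t,x)$ can only be bounded by $C\epsilon^2(t-\tau_3)$, and $t-\tau_3$ is comparable to $t$ for $x$ near the shock. The factors $\alpha^n$ damp only the older, shorter legs. The chain therefore yields $W_3(t)\lesssim\epsilon+\hbar+\epsilon^2 t$, not a uniform bound, and the contributions are not ``summable per reflection''.

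Integrating by parts only moves the difficulty. It produces $\hat\Phi_3\,b_3'(\hat\Phi_2)\,\lambda_1\partial_x\hat\Phi_2$, where $\partial_x\hat\Phi_2$ is the $O(\epsilon)$, non-decaying entropy profile deposited by the shock through \eqref{b56}. Controlling its integral along a leg would need a bound on the total variation of the entropy over $[0,\chi(t)]$, which $C^1$-smallness of $B'-b_0$ does not give.

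You cannot borrow this step from the paper. It argues exactly as you do and writes $C\epsilon_1^2$ or $C\epsilon_1\epsilon$ for these integrals in \eqref{c9}, \eqref{c10}, \eqref{c18}, \eqref{c20}. In doing so it silently drops the length $t-\tau_3$ or $t-\tau_1$ of the characteristic. Some additional mechanism is required, for instance decay assumptions on $B'-b_0$ with time-weighted norms, or a total-variation bound on the entropy. Neither your proposal nor the paper supplies one.

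Two further corrections. First, \eqref{BBB12} does not give $\chi_0>\lambda_3(0)$. The Lax condition \eqref{a10} gives $c_0<\chi_0<\lambda_b=\lambda_3(0)$. This is exactly why $\hat\Phi_3$ impinges on the shock, so that \eqref{b55}, \eqref{b56}, \eqref{BB49} are the right number of conditions there; the paper uses $\lambda_b-\chi_0>0$ in \eqref{c19} and \eqref{c25}. Hence $t-\chi^{-1}(x)$ \emph{decreases} along a $3$-characteristic, reaching $0$ at the shock. The source integral is still $O(\hbar)$, but the reason is $\tau-\chi^{-1}(\eta_3(\tau;t,x))\ge(\lambda_b/\chi_0-1-C\epsilon)(t-\tau)$, not linear growth.

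Second, the statement fixes $\hbar^*$ independently of $\epsilon$, so neither $\hbar\le\epsilon^2$ nor $C\hbar^*\le\epsilon$ is allowed, and no proof can avoid this. For $B'\equiv b_0$ the hypothesis holds for every $\epsilon\in(0,\epsilon_1)$, so \eqref{c1} would force $\hat{\mathbf{\Phi}}\equiv0$. Yet \eqref{b51} gives $\partial_t\hat\Phi_2=\beta\kappa\hbar/T>0$ on the shock. The paper tacitly uses $C\hbar\le M_1\epsilon$ (see \eqref{c9}, \eqref{c23}). You should state the result with $\hbar\le c\,\epsilon$, or with $M(\epsilon+\hbar)$ in place of $M\epsilon$.
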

\begin{proof}
According to the local existence and uniqueness of classical solution (c.f.~\cite{Yuw}), the free boundary value problems~\eqref{b50}-\eqref{b56} admit a unique classical solution: $\hat{\mathbf{\Phi}}(t,x)\in C^{1}$ and $\chi(t)\in C^{2}$ at least on a local domain $\hat{D}_{l}(\delta)=\{(t,x)|0<t\leq\delta, 0\leq x\leq\chi(t)\}$ with a sufficiently small constant $\delta>0$. Thus, in order to get the global existence and uniqueness of classical solution, it is only necessary to prove that if~\eqref{a11} holds for suitably small $\epsilon>0$, then there exists a positive constant $M$ such that the uniform a priori estimate~\eqref{c1} holds on the whole existence domain $\hat{D}_{l}$ of $C^{1}$ solution.

Let the set
$$\mathcal{F}=\{\sigma\in C^{2}(\mathbb{R}_{+})|\sigma(0)=0, \|\sigma'(t)-\chi_{0}\|_{C^{1}(\mathbb{R}_{+})}\leq M_{F}\epsilon\},$$
then we take any $\chi(t)\in\mathcal{F}$ and then consider the fixed boundary value problem~\eqref{b50}-\eqref{b56}.

For the time being, we suppose
\begin{align}
&\|\hat{\mathbf{\Phi}}(t,x)\|_{C^{1}(\hat{D}_{l})}\leq M_{0}\epsilon_{1}. \label{c3}
\end{align}
This reasonableness of this hypothesis will be explained at the end of the proof.

Obviously,
\begin{align}
\lambda_{1}(\hat{\Phi}_{2})<0<\lambda_{3}(\hat{\Phi}_{2}).\label{c4}
\end{align}
Let $\eta_{1}=\eta_{1}(\tau;t,x)$ and $\eta_{3}=\eta_{3}(\tau;t,x)$ be the $1$st and $3$rd characteristic curve passing through a point $(t,x)$ respectively, which satisfy the following equations
\begin{align}\label{c6}
\left\{\begin{aligned}
&\frac{d\eta_{1}(\tau;t,x)}{d\tau}=\lambda_{1}(\hat{\Phi}_{2}(\tau,\eta_{1}(\tau;t,x))),\\
&\tau=t:~~\eta_{1}(t;t,x)=x,
\end{aligned}\right.
\end{align}
and
\begin{align}\label{cc6}
\left\{\begin{aligned}
&\frac{d\eta_{3}(\tau;t,x)}{d\tau}=\lambda_{3}(\hat{\Phi}_{2}(\tau,\eta_{3}(\tau;t,x))),\\
&\tau=t:~~\eta_{3}(t;t,x)=x.
\end{aligned}\right.
\end{align}
Now, we provide the proof of estimate~\eqref{c1}. Specifically, we will prove that the following estimates hold:
\begin{align}
\|\hat{\mathbf{\Phi}}(t,x)\|_{C^{0}(\hat{D}_{l})}\leq& M_{1}\epsilon, \label{M1}\\
\mathop{\max}\limits_{i=1,2,3}\|\partial_{t}\hat{\Phi}_{i}(t,x)\|_{C^{0}(\hat{D}_{l})}\leq& M_{1}\epsilon, \label{M2}\\
\max\big\{\|\partial_{x}\hat{\Phi}_{1}(t,x)\|_{C^{0}(\hat{D}_{l})},\|\partial_{x}\hat{\Phi}_{3}(t,x)&\|_{C^{0}(\hat{D}_{l})}\big\}\leq M_{2}\epsilon, \label{M3}\\
\|\partial_{x}\hat{\Phi}_{2}(t,x)\|_{C^{0}(\hat{D}_{l})}\leq& M_{3}\epsilon, \label{M4}
\end{align}
where $M_{i}(i=1,2,3)$ are some positive constants, which is less than $M$ and will be determined later.

Obviously, the estimates~\eqref{M1}-\eqref{M4} hold on $\hat{D}_{l}(\delta_{0})=\{(t,x)|0<t\leq\delta_{0}, 0\leq x\leq\chi(t)\}$ when $\delta_{0}>0$ is suitably small. Now suppose that~\eqref{M1}-\eqref{M4} hold on a domain $\hat{D}_{l}(T_{0})=\{(t,x)|0<t\leq T_{0}, 0\leq x\leq\chi(t)\}$, we will prove that for given $\delta_{1}>0$ independent of $T_{0}$, \eqref{M1}-\eqref{M4} hold on $\hat{D}_{l}(T_{0}+\delta_{1})$, provided that the $C^{1}$ solution exists on $\hat{D}_{l}(T_{0}+\delta_{1})$.

Let
$$\partial_{1}=\partial_{t}+\lambda_{1}(\hat{\Phi}_{2})\partial_{x},\quad \partial_{3}=\partial_{t}+\lambda_{3}(\hat{\Phi}_{2})\partial_{x},$$
and
$$\partial_{\hat{F}}=\partial_{t}+\hat{F}(\hat{\mathbf{\Phi}})\partial_{x}.$$
Then, we have
\begin{align}
&\partial_{\hat{F}}=\frac{\lambda_{1}(\hat{\Phi}_{2})-\hat{F}(\hat{\mathbf{\Phi}})}{\lambda_{1}(\hat{\Phi}_{2})}\partial_{t}
+\frac{\hat{F}(\hat{\mathbf{\Phi}})}{\lambda_{1}(\hat{\Phi}_{2})}\partial_{1}
=\frac{\lambda_{3}(\hat{\Phi}_{2})-\hat{F}(\hat{\mathbf{\Phi}})}{\lambda_{3}(\hat{\Phi}_{2})}\partial_{t}
+\frac{\hat{F}(\hat{\mathbf{\Phi}})}{\lambda_{3}(\hat{\Phi}_{2})}\partial_{3},\label{v1}\\
&\partial_{x}=\frac{1}{\hat{F}(\hat{\mathbf{\Phi}})}(\partial_{\hat{F}}-\partial_{t}).\label{v2}
\end{align}
Moreover, equations~\eqref{b50}-\eqref{b52} become
\begin{align}
\partial_{1}\hat{\Phi}_{1}=&-\frac{1}{\alpha\beta}b_{2}(\hat{\mathbf{\Phi}})\partial_{1}\hat{\Phi}_{2}
-\frac{1}{\alpha}b_{3}(\hat{\Phi}_{2})\partial_{1}\hat{\Phi}_{3}+\frac{1}{\alpha}\Big(\big(1+b_{3}(\hat{\Phi}_{2})\big)\sqrt{\gamma e^{s_{\iota}}}\big(\frac{\gamma+1}{\gamma}-\nu_{\iota}\big)\notag\\
&+b_{2}(\hat{\mathbf{\Phi}})\Big)\frac{1}{T}\kappa\hbar e^{-\kappa\big(t-\chi^{-1}(x)\big)},\label{V1}\\
\partial_{t}\hat{\Phi}_{2}=&\beta \frac{1}{T}\kappa\hbar e^{-\kappa\big(t-\chi^{-1}(x)\big)},\label{V2}\\
\partial_{3}\hat{\Phi}_{3}=&-\alpha b_{3}(\hat{\Phi}_{2})\partial_{3}\hat{\Phi}_{1}-\frac{1}{\beta} b_{2}(\hat{\mathbf{\Phi}})\partial_{3}\hat{\Phi}_{2}+\Big(\big(1+b_{3}(\hat{\Phi}_{2})\big)\sqrt{\gamma e^{s_{\iota}}}\big(\frac{\gamma+1}{\gamma}-\nu_{\iota}\big)\notag\\
&+b_{2}(\hat{\mathbf{\Phi}})\Big)\frac{1}{T}\kappa\hbar e^{-\kappa\big(t-\chi^{-1}(x)\big)}.\label{V3}
\end{align}

For any fixed $(t,x)\in\hat{D}_{l}(T_{0}+\delta_{1})$, there exists a unique $\tau_{i}=\tau_{i}(t,x)<t (i=1,2,3)$, such that
\begin{align}
\eta_{1}(\tau_{1};t,x)=\chi(\tau_{1}),~~x=\chi(\tau_{2}),~~\eta_{3}(\tau_{3};t,x)=0.\label{c7}
\end{align}
Let
$$\tau_{3,1}=\tau_{3,1}(t,x)=\tau_{1}(\tau_{3}(t,x),0),$$
and $0\leq\tau_{3,1}\leq T_{0}$.

Firstly, we integrate~\eqref{V3} along the $3$rd characteristic curve $\eta_{3}$ from $(\tau_{3},0)$ to $(t,x)$ and then integrate~\eqref{V1} along the $1$st characteristic curve $\eta_{1}$ from $(\tau_{3,1},\chi(\tau_{3,1}))$ to $(\tau_{3},0)$ to obtain
\begin{align}
|\hat{\Phi}_{3}(t,x)|\leq&\big|\hat{\Phi}_{3}\big(\tau_{3},0\big)\big|+\Big|\int_{\tau_{3}}^{t}\Big(-\alpha b_{3}(\hat{\Phi}_{2})\partial_{3}\hat{\Phi}_{1}-\frac{1}{\beta} b_{2}(\hat{\mathbf{\Phi}})\partial_{3}\hat{\Phi}_{2}+\Big(\big(1+b_{3}(\hat{\Phi}_{2})\big)\sqrt{\gamma e^{s_{\iota}}}\big(\frac{\gamma+1}{\gamma}-\nu_{\iota}\big)\notag\\
&+b_{2}(\hat{\mathbf{\Phi}})\Big)\frac{1}{T}\kappa\hbar e^{-\kappa\big(\tau-\chi^{-1}(\eta_{3}(\tau;t,x))\big)}\Big)(\tau,\eta_{3}(\tau;t,x))d\tau\Big|\notag\\
\leq&\alpha\big|\hat{\Phi}_{1}\big(\tau_{3},0\big)\big|+2|B'(\tau_{3})-b_{0}|+C\kappa\hbar\int_{\tau_{3}}^{t}
e^{-\kappa\big(\tau-\chi^{-1}(\eta_{3}(\tau;t,x))\big)}d\tau+C\epsilon_{1}^{2}\notag\\
\leq&\alpha\big|\hat{\Phi}_{1}\big(\tau_{3,1},\chi(\tau_{3,1})\big)\big|+2|B'(\tau_{3})-b_{0}|
+C\kappa\hbar\int_{\tau_{3,1}}^{\tau_{3}}e^{-\kappa\big(\tau-\chi^{-1}(\eta_{1}(\tau;t,x))\big)}d\tau\notag\\
&+C\kappa\hbar\int_{\tau_{3}}^{t}e^{-\kappa\big(\tau-\chi^{-1}(\eta_{3}(\tau;t,x))\big)}d\tau+C\epsilon_{1}^{2}\notag\\
\leq&\big|G_{1}\big(\hat{\Phi}_{3}\big(\tau_{3,1},\chi(\tau_{3,1})\big)\big)\big|+2|B'(\tau_{3})-b_{0}|
+C\kappa\hbar\int_{\tau_{3,1}}^{\tau_{3}}e^{-\kappa(\tau-\tau_{3,1})}d\tau\notag\\
&+C\kappa\hbar\int_{\tau_{3}}^{t}e^{-\kappa(\tau-\tau_{3})}d\tau+C\epsilon_{1}^{2}\notag\\
\leq&|h_{1,0}|M_{1}\epsilon+2\epsilon+C\hbar+C\epsilon_{1}^{2}\notag\\
\leq&M_{1}\epsilon,\label{c9}
\end{align}
where $C>0$ is a generic constant, which is different in different places.~Here we used~\eqref{a11}, \eqref{b26}, \eqref{b48}, \eqref{c3} and $\hbar$ sufficiently small.

Then, integrating~\eqref{V1} along the $1$st characteristic curve $\eta_{1}$ from $\big(\tau_{1},\chi(\tau_{1})\big)$ to $(t,x)$, and using~\eqref{b26},\eqref{b58},\eqref{c3} and~\eqref{c9}, we get
\begin{align}
|\hat{\Phi}_{1}(t,x)|\leq&\frac{1}{\alpha}\big|G_{1}\big(\hat{\Phi}_{3}\big(\tau_{1},\chi(\tau_{1})\big)\big)\big|
+C\kappa\hbar\int_{\tau_{1}}^{t}e^{-\kappa(\tau-\tau_{1})}d\tau+C\epsilon_{1}^{2}\notag\\
\leq&\frac{|h_{1,0}|}{\alpha}M_{1}\epsilon+C\hbar+C\epsilon_{1}^{2}\notag\\
\leq&M_{1}\epsilon.\label{c10}
\end{align}
Integrating~\eqref{V2} from $\big(\tau_{2},\chi(\tau_{2})\big)$ to $(t,x)$ and by~\eqref{b58},\eqref{c3} and~\eqref{c9}, we have
\begin{align}
|\hat{\Phi}_{2}(t,x)|\leq&\beta|h_{2,0}|\big|\hat{\Phi}_{3}\big(\tau_{2},\chi(\tau_{2})\big)\big|+\beta\frac{1}{T}\kappa\hbar \int_{\tau_{2}}^{t} e^{-\kappa\big(\tau-\tau_{2}\big)}d\tau\notag\\
\leq&\beta|h_{2,0}|\big|\hat{\Phi}_{3}\big(\tau_{2},\chi(\tau_{2})\big)\big|+C\hbar\notag\\
\leq&\beta|h_{2,0}|M_{1}\epsilon+C\hbar\notag\\
\leq&M_{1}\epsilon.\label{c11}
\end{align}
Thus, we proved~\eqref{M1}.

Taking the derivative of equations~\eqref{V1} and~\eqref{V3} with respect to $t$ and using the equation~\eqref{V2}, we have
\begin{align}
\partial_{1}\big(\partial_{t}\hat{\Phi}_{1}\big)
=&-\frac{1}{\alpha\beta}\sum_{i=1}^{3}\frac{\partial b_{2}}{\partial\hat{\Phi}_{i}}\partial_{t}\hat{\Phi}_{i}\partial_{1}\hat{\Phi}_{2}
-\frac{1}{\alpha\beta}b_{2}(\hat{\mathbf{\Phi}})\partial_{1}\big(\partial_{t}\hat{\Phi}_{2}\big)
-\frac{1}{\alpha}\frac{\partial b_{3}}{\partial\hat{\Phi}_{2}}\partial_{t}\hat{\Phi}_{2}\partial_{1}\hat{\Phi}_{3}\notag\\
&-\frac{1}{\alpha}b_{3}(\hat{\Phi}_{2})\partial_{1}\big(\partial_{t}\hat{\Phi}_{3}\big)+R_{1}\kappa\hbar e^{-\kappa\big(t-\chi^{-1}(x)\big)},
\label{c16}\\
\partial_{3}\big(\partial_{t}\hat{\Phi}_{3}\big)
=&-\alpha\frac{\partial b_{3}}{\partial\hat{\Phi}_{2}}\partial_{t}\hat{\Phi}_{2}\partial_{3}\hat{\Phi}_{1}-\alpha b_{3}(\hat{\Phi}_{2})\partial_{3}\big(\partial_{t}\hat{\Phi}_{1}\big)-\frac{1}{\beta}\sum_{i=1}^{3}\frac{\partial b_{2}}{\partial\hat{\Phi}_{i}}\partial_{t}\hat{\Phi}_{i}\partial_{3}\hat{\Phi}_{2}\notag\\
&-\frac{1}{\beta} b_{2}(\hat{\mathbf{\Phi}})\partial_{3}\big(\partial_{t}\hat{\Phi}_{2}\big)+R_{3}\kappa\hbar e^{-\kappa\big(t-\chi^{-1}(x)\big)},
\label{c17}
\end{align}
where
\begin{align*}
R_{1}=&-\Big(\frac{\partial\lambda_{1}}{\partial\hat{\Phi}_{2}}\partial_{x}\hat{\Phi}_{1}+\frac{1}{\alpha\beta}b_{2}(\hat{\mathbf{\Phi}})
\frac{\partial\lambda_{1}}{\partial\hat{\Phi}_{2}}\partial_{x}\hat{\Phi}_{2}+\frac{1}{\alpha}b_{3}(\hat{\Phi}_{2})
\frac{\partial\lambda_{1}}{\partial\hat{\Phi}_{2}}\partial_{x}\hat{\Phi}_{3}\Big)\beta\frac{1}{T}\notag\\
&+\frac{1}{\alpha}\Big(\frac{\partial b_{3}}{\partial\hat{\Phi}_{2}}\partial_{t}\hat{\Phi}_{2}\sqrt{\gamma e^{s_{\iota}}}\big(\frac{\gamma+1}{\gamma}-\nu_{\iota}\big)+\sum_{i=1}^{3}\frac{\partial b_{2}}{\partial\hat{\Phi}_{i}}\partial_{t}\hat{\Phi}_{i}\Big)\frac{1}{T}\\
&-\frac{1}{\alpha}\Big(\big(1+b_{3}(\hat{\Phi}_{2})\big)
\sqrt{\gamma e^{s_{\iota}}}\big(\frac{\gamma+1}{\gamma}-\nu_{\iota}\big)+b_{2}(\hat{\mathbf{\Phi}})\Big)\frac{1}{T}\kappa\notag\\
&-\frac{1}{\alpha}\Big(\big(1+b_{3}(\hat{\Phi}_{2})\big)
\sqrt{\gamma e^{s_{\iota}}}\big(\frac{\gamma+1}{\gamma}-\nu_{\iota}\big)+b_{2}(\hat{\mathbf{\Phi}})\Big)\frac{1}{T^{2}}\partial_{t}T,\\
R_{3}=&-\Big(\frac{\partial\lambda_{3}}{\partial\hat{\Phi}_{2}}\partial_{x}\hat{\Phi}_{3}+\alpha b_{3}(\hat{\Phi}_{2})
\frac{\partial\lambda_{3}}{\partial\hat{\Phi}_{2}}\partial_{x}\hat{\Phi}_{1}+\frac{1}{\beta}b_{2}(\hat{\mathbf{\Phi}})
\frac{\partial\lambda_{3}}{\partial\hat{\Phi}_{2}}\partial_{x}\hat{\Phi}_{2}\Big)\beta\frac{1}{T}\notag\\
&+\Big(\frac{\partial b_{3}}{\partial\hat{\Phi}_{2}}\partial_{t}\hat{\Phi}_{2}\sqrt{\gamma e^{s_{\iota}}}\big(\frac{\gamma+1}{\gamma}-\nu_{\iota}\big)+\sum_{i=1}^{3}\frac{\partial b_{2}}{\partial\hat{\Phi}_{i}}\partial_{t}\hat{\Phi}_{i}\Big)\frac{1}{T}\\
&-\Big(\big(1+b_{3}(\hat{\Phi}_{2})\big)
\sqrt{\gamma e^{s_{\iota}}}\big(\frac{\gamma+1}{\gamma}-\nu_{\iota}\big)+b_{2}(\hat{\mathbf{\Phi}})\Big)\frac{1}{T}\kappa\notag\\
&-\Big(\big(1+b_{3}(\hat{\Phi}_{2})\big)
\sqrt{\gamma e^{s_{\iota}}}\big(\frac{\gamma+1}{\gamma}-\nu_{\iota}\big)+b_{2}(\hat{\mathbf{\Phi}})\Big)\frac{1}{T^{2}}\partial_{t}T.
\end{align*}
Moreover, it follows from~\eqref{b26} and~\eqref{c3}
\begin{align}
R_{1}<0,\quad R_{3}<0.\label{RR1}
\end{align}
Similar to~\eqref{c9}, integrating~\eqref{c17} along the $3$rd characteristic curve $\eta_{3}$ from $(\tau_{3},0)$ to $(t,x)$ and then integrating~\eqref{c16} along the $1$st characteristic curve $\eta_{1}$ from $(\tau_{3,1},\chi(\tau_{3,1}))$ to $(\tau_{3},0)$, we have
\begin{align}
\big|\partial_{t}\hat{\Phi}_{3}(t,x)\big|
\leq&|h_{1,0}|M_{1}\epsilon+2\epsilon+C\epsilon_{1}\epsilon\notag\\
\leq&M_{1}\epsilon.\label{c18}
\end{align}
Here we used~\eqref{a11},\eqref{b26},\eqref{b48},\eqref{c3},\eqref{M1} and~\eqref{RR1}.

By~\eqref{v1} and~\eqref{b26},\eqref{b55},\eqref{c3},\eqref{M1},\eqref{V1},\eqref{V3},\eqref{c18}, we obtain at the boundary $x=\chi(t)$
\begin{align}
|\partial_{t}\hat{\Phi}_{1}(t,\chi(t))|\leq&\Big|\frac{\lambda_{1}(\hat{\Phi}_{2})}{\lambda_{1}(\hat{\Phi}_{2})-\hat{F}(\hat{\mathbf{\Phi}})}
\Big(\partial_{\hat{F}}\hat{\Phi}_{1}(t,\chi(t))
-\frac{\hat{F}(\hat{\mathbf{\Phi}})}{\lambda_{1}(\hat{\Phi}_{2})}\partial_{1}\hat{\Phi}_{1}(t,\chi(t))\Big)\Big|\notag\\
\leq&\frac{\lambda_{b}+C\epsilon}{\lambda_{b}+\chi_{0}-C\epsilon}
\frac{|\hbar_{1,0}|}{\alpha}\big|\partial_{\hat{F}}\hat{\Phi}_{3}(t,\chi(t))\big|+C\epsilon_{1}\epsilon+C\hbar\notag\\
\leq&\frac{\lambda_{b}+C\epsilon}{\lambda_{b}+\chi_{0}-C\epsilon}\frac{|\hbar_{1,0}|}{\alpha}
\frac{\lambda_{b}-\chi_{0}+C\epsilon}{\lambda_{b}-C\epsilon}\|\partial_{t}\hat{\Phi}_{3}\|_{C^{0}}+C\epsilon_{1}\epsilon+C\hbar
\notag\\
\leq&\frac{|\hbar_{1,0}|}{\alpha}M_{1}\epsilon,\label{c19}
\end{align}
where the constant $\lambda_{b}=\sqrt{\gamma e^{s_{\iota}}}$.
Then, integrating~\eqref{c16} along the $1$st characteristic curve $\eta_{1}$ from $\big(\tau_{1},\chi(\tau_{1})\big)$ to $(t,x)$, and using~\eqref{b26},\eqref{b58},\eqref{c3},\eqref{M1} and~\eqref{c19}, we get
\begin{align}
|\partial_{t}\hat{\Phi}_{1}(t,x)|\leq&|\partial_{t}\hat{\Phi}_{1}(\tau_{1},\chi(\tau_{1}))|+C\epsilon_{1}\epsilon\notag\\
\leq&\frac{|\hbar_{1,0}|}{\alpha}M_{1}\epsilon+C\epsilon_{1}\epsilon\notag\\
\leq&M_{1}\epsilon.\label{c20}
\end{align}
By the equation~\eqref{v2}, we obtain
\begin{align}
|\partial_{t}\hat{\Phi}_{2}(t,x)|\leq\Big|\beta \frac{1}{T}\kappa\hbar e^{-\kappa\big(t-\tau_{2}\big)}\Big|\leq C\hbar\leq M_{1}\epsilon.\label{c23}
\end{align}
Thus, we proved~\eqref{M2}.

By equations~\eqref{V1} and~\eqref{V3}, and~\eqref{b26},\eqref{c3},\eqref{M1},\eqref{c18},\eqref{c20}, we have
\begin{align}
|\partial_{x}\hat{\Phi}_{1}(t,x)|
\leq&\mu_{\max}M_{1}\epsilon+C\epsilon_{1}\epsilon+C\hbar
\leq M_{2}\epsilon,\label{c21}\\
|\partial_{x}\hat{\Phi}_{3}(t,x)|\leq&M_{2}\epsilon,\label{c22}
\end{align}
where constants $\mu_{\max}=\mathop{\sup}\limits_{(t,x)}\frac{1}{\lambda_{3}(\hat{\Phi}_{2}(t,x))}$ and $M_{2}>\mu_{max}M_{1}$. Thus, we proved~\eqref{M3}.

Taking the derivative of equations~\eqref{V2} with respect to $x$, one has
\begin{align}
\partial_{t}\Big(\partial_{x}\hat{\Phi}_{2}\Big)=-\beta\frac{1}{T^{2}}\partial_{x}T\kappa\hbar e^{-\kappa\big(t-\chi^{-1}(x)\big)}
+\beta\frac{1}{T}\kappa^{2}\frac{1}{\chi'(t)}\hbar e^{-\kappa\big(t-\chi^{-1}(x)\big)}.\label{c24}
\end{align}
Then, by~\eqref{b26},\eqref{b58},\eqref{b56},\eqref{c3},\eqref{M1},\eqref{v2},\eqref{V2}-\eqref{V3} and~\eqref{c18}, we obtain
\begin{align}
\Big|\partial_{x}\hat{\Phi}_{2}(t,x)\Big|\leq&\Big|\partial_{x}\hat{\Phi}_{2}(\tau_{2},\chi(\tau_{2}))\Big|+C\kappa\hbar
\int_{\tau_{2}}^{t}e^{-\kappa\big(\tau-\tau_{2}\big)}d\tau\notag\\
\leq&\frac{1}{\chi_{0}-C\epsilon}\Big(\big|\partial_{\hat{F}}\hat{\Phi}_{2}(\tau_{2},\chi(\tau_{2}))\big|
+\big|\partial_{t}\hat{\Phi}_{2}(\tau_{2},\chi(\tau_{2}))\big|\Big)+C\hbar\notag\\
\leq&\frac{1}{\chi_{0}-C\epsilon}\beta|h_{2,0}|\big|\partial_{\hat{F}}\hat{\Phi}_{3}(\tau_{2},\chi(\tau_{2}))\big|+C\hbar\notag\\
\leq&\frac{1}{\chi_{0}-C\epsilon}\beta|h_{2,0}|\frac{\lambda_{b}-\chi_{0}+C\epsilon}{\lambda_{b}-C\epsilon}
\|\partial_{t}\hat{\Phi}_{3}\|_{C^{0}}+C\epsilon_{1}\epsilon+C\hbar\notag\\
\leq&\frac{1}{\chi_{0}-C\epsilon}\beta|h_{2,0}|\frac{\lambda_{b}-\chi_{0}+C\epsilon}{\lambda_{b}-C\epsilon}
M_{1}\epsilon+C\epsilon_{1}\epsilon+C\hbar\notag\\
\leq&M_{3}\epsilon,\label{c25}
\end{align}
where the constant $M_{3}>\frac{1}{\chi_{0}}M_{1}$. Thus, we proved~\eqref{M4}. Furthermore, we obtain that there exists a positive constant $M$ such that~\eqref{c1} holds.~And the previous hypothesis~\eqref{c3} is actually reasonable for $M_{0}>\mathop{\max}\limits_{i=1,2,3}\{M_{i}\}$, provided that $\epsilon$ is suitably small.

By~\eqref{BB12}, the equations~\eqref{BB49} and estimate~\eqref{c1}, we have
\begin{align}
&\|\chi'(t)-\chi_{0}\|_{C^{0}(\mathbb{R}_{+})}\leq\big|\hat{F}\big(\hat{\mathbf{\Phi}}(t,\chi(t))\big)-\chi_{0}\big|\leq M_{F,1}\epsilon,\label{c27}\\
&\|\chi''(t)\|_{C^{0}(\mathbb{R}_{+})}\leq\Big|\sum_{i=1}^{3}\frac{\partial\hat{F}}{\partial\hat{\Phi}_{i}}
\partial_{t}\hat{\Phi}_{i}(t,\chi(t))\Big|\leq M_{F,2}\epsilon.\label{c28}
\end{align}
Then, taking the constant $M_{F}>\max\{M_{F,1},M_{F,2}\}$, one has $\chi(t)\in \mathcal{F}$ and estimate~\eqref{c2} holds. Thus, we finish the proof of~\theref{t2}.

\end{proof}

\section*{Acknowledgement}
 \indent\indent Huimin Yu is supported in part by NSFC Grant No. 12271310 and Natural Science Foundation of Shandong Province ZR2022MA088.

\end{sloppypar}
\end{document}